\RequirePackage{fix-cm} 
\documentclass[a4paper,twoside,12pt,reqno]{amsart}

\usepackage{fixltx2e}     

\usepackage[english]{babel}
\usepackage[latin1]{inputenc}

\usepackage{indentfirst,verbatim}

\usepackage{amsmath,amsfonts,amssymb,amsgen,amsbsy,eucal,mathrsfs,dsfont}
\usepackage{stmaryrd}

\usepackage[square,numbers]{natbib}

\usepackage{a4wide,verbatim}

\usepackage{epsfig,rotating,color}

\usepackage{multicol}
\usepackage{tikz,pgfplots}
\usepackage{graphicx}
\usepackage{mathtools}

\usepackage{textcomp}

\usepackage{caption}

\usepackage{hyperref}

\usepackage[square,numbers]{natbib}

\usepackage{systeme}

\hypersetup{
colorlinks=true, 
breaklinks=true, 
urlcolor= blue, 
linkcolor= blue, 
bookmarksopen=true, 
pdftitle={}, 
pdfauthor={}, 
pdfsubject={}, 
citecolor=red,  
}

\newcommand{\R}{\mathbb R}

\newcommand{\supp}{\operatorname{supp}}
\newcommand{\dist}{\operatorname{dist}}

\newcommand{\dd}{\,\mathrm d}

\usepackage{amsthm}
\newtheorem{thm}{Theorem}[section]

\newtheorem{lem}[thm]{Lemma}

\newtheorem{pro}[thm]{Proposition}

\theoremstyle{remark}
\newtheorem*{rem}{Remark}

\usepackage{nicefrac}

\usepackage{enumitem}
\newlist{steps}{enumerate}{1}
\setlist[steps, 1]{label = Step \arabic*:}

\newcommand*{\bydef}{\overset{\rm def}{=}}
\newcommand*{\norm}[1]{\left\Vert #1\right\Vert}

\newcommand{\eqdef}{\stackrel{\text{\tiny{def}}}{=}}

\title[Hunter--Saxton equation: Strong Ill-posedness]{\bf Strong Ill-Posedness in critical and subcritical regimes for the Hunter--Saxton equation}

\author[Guelmame and Houamed]{Billel Guelmame and Haroune Houamed}

\newcommand{\nfont}{\fontshape{n}\selectfont}

\address{({\nfont\textbf{Billel Guelmame}})  New York University Abu Dhabi, Abu Dhabi, United Arab Emirates} 
\email{billel.guelmame@nyu.edu}

\address{({\nfont\textbf{Haroune Houamed}}) Ko\c c University, Istanbul, Turkey} 
\email{hhouamed@ku.edu.tr, haroune.houamed@nyu.edu}

\let\oldtocsection=\tocsection
 
\let\oldtocsubsection=\tocsubsection

\renewcommand{\tocsection}[2]{\hspace{0em}\oldtocsection {#1}{#2}}
\renewcommand{\tocsubsection}[2]{\hspace{2em}\oldtocsubsection{#1}{#2}}

\numberwithin{equation}{section}

\usepackage{tabularx}
\usepackage[pagewise]{lineno} 

\begin{document}

\maketitle

\begin{abstract}

We study the  Hunter--Saxton equation on the real line and its  global dissipative solution in the energy space
$
L^\infty(\mathbb R)\cap \dot H^1(\mathbb R).
$
We prove strong ill-posedness through instantaneous failure of Sobolev regularity at and below the Lipschitz threshold. More precisely, for every $s\in(1,\nicefrac32]$, we construct
$
u_0\in L^\infty(\mathbb R)\cap\dot H^1(\mathbb R)\cap\dot H^s(\mathbb R)
$
whose unique global dissipative solution satisfies $u\notin C([0,T]; \dot H^s (\mathbb R))$, for every $T>0$.

The constructions differ substantially in the subcritical and critical regimes. For $1<s<\nicefrac32$, we superpose rescaled, localized bubbles with increasingly negative slopes; subcritical scaling preserves $\dot H^s$-summability, while the explicit characteristic formula produces norm inflation. At $s=\nicefrac32$, where scaling yields no smallness, we use logarithmically distributed compactly supported multiscale profiles whose breaking times converge to zero. A one-sided localization principle and an almost-orthogonality estimate then transfer the inflation of individual profiles to the full dissipative solution.

\end{abstract}

\medskip

 {\small{\bf AMS Classification :} 35Q35; 35L65; 35L67.

\medskip

{\bf Key words :} Hunter--Saxton equation; dissipative solution; wave breaking; critical Sobolev space;
strong ill-posedness; norm inflation.}

\tableofcontents

\section{Introduction}

The Hunter--Saxton equation was introduced by Hunter and Saxton as an asymptotic model for the propagation of orientation waves in a nematic liquid crystal \cite{HunterSaxton1991}. In differential form it reads
\begin{equation}\label{HS-differential-intro}
  \partial_x(\partial_t u+u \partial_x u  ) =\frac12 (\partial_x u)^2,
  \qquad (t,x)\in (0,\infty)\times\mathbb R.
\end{equation}
Since integration in the space variable determines the equation only up to a function of time, a normalization must be fixed on the real line. 
Throughout this paper we use the one-sided normalization
\begin{equation}\label{HS}
  \partial_t u + u  \partial_x u = \frac12 \int_{-\infty}^{x}\big(\partial_x u(t,y)\big)^2 \, \mathrm dy,
  \qquad u(0,x)=u_0(x).
\end{equation}
Thus the otherwise arbitrary function of time is set equal to zero.  
Other common formulations, including the symmetric normalization, differ by a time-dependent spatially constant correction.  
Fixing the normalization is essential when characteristic formulas and one-sided support properties are
used.

Besides its origin in liquid-crystal theory, the Hunter--Saxton equation has a rich analytic and geometric structure. 
It is completely integrable and admits a bi-Hamiltonian description \cite{HunterZheng1994}. 
It also arises as the geodesic equation for a right-invariant homogeneous $\dot H^1$ metric on a homogeneous space of the diffeomorphism group of the circle \cite{KhesinMisiolek2003,Lenells2007Sphere,Lenells2007Geodesic,Lenells2008}. 
These structures yield explicit Lagrangian representations. 
They also capture the mechanism of wave breaking, where the solution remains bounded and continuous while its slope diverges to $-\infty$.

Indeed, if $u$ is smooth and $q=\partial_x u$, differentiating
\eqref{HS} gives
\begin{equation}\label{q-intro}
  \partial_t q+u \partial_x q=-\frac12 q^2.
\end{equation}
Let $X$ be the characteristic flow, $\partial_t X(t,\xi)=u(t,X(t,\xi))$ and $X(0,\xi)=\xi$. 
As long as the classical solution exists, it holds that
\begin{equation}\label{riccati-intro}
  q(t,X(t,\xi))
  =\frac{2q_0(\xi)}{2+tq_0(\xi)}.
\end{equation}
Consequently, a nonnegative initial slope produces a global classical
evolution, whereas any negative initial slope leads to wave breaking at (see Proposition \ref{prop:blowup time} below)
\begin{equation}\label{breaking-time-intro}
  T_*=-\frac{2}{\inf_{x\in\mathbb R}q_0(x)}
  \quad\text{when }\inf_{x\in\mathbb R}q_0(x)<0.
\end{equation}
The early analysis of Hunter and Zheng established global weak solutions and clarified the role of vanishing viscosity and dispersion \cite{HunterZheng1995I,HunterZheng1995II}. 
After wave breaking, weak solutions are not selected uniquely without an additional admissibility principle. 
Two canonical continuations are the conservative solution, which retains the concentrated energy, and the dissipative solution, which removes the energy at breaking. 
Global dissipative semigroups and stability metrics were developed in \cite{BressanConstantin2005,BressanHoldenRaynaud2010}; uniqueness of the relevant generalized characteristics for dissipative solutions was proved by Dafermos \cite{Dafermos2011}. 
The conservative theory has a parallel Lagrangian formulation, and a recent uniqueness theorem is given in \cite{GrunertHolden2022}. 
In the present work, ``solution'' always means the unique global dissipative solution associated with the normalization \eqref{HS}.

The natural energy class for this dissipative flow is
\begin{equation*}
  E \bydef \bigl\{u\in L^\infty(\mathbb R):\partial _x u \in L^2(\mathbb R)\bigr\}  = L^\infty(\mathbb R)\cap \dot H^1(\mathbb R).
\end{equation*}
For smoother data, the equation is locally well posed by standard transport estimates. 
In Sobolev notation the threshold suggested by \eqref{q-intro} is $u_0\in\dot H^s$ with $s>\nicefrac{3}{2}$: then $q_0\in H^{s-1}\hookrightarrow L^\infty (\mathbb R)$, so the velocity is Lipschitz and the transport equation for $q$ can be closed. 
The endpoint $s=\nicefrac{3}{2}$ is qualitatively different because of failure of Sobolev embeddings, that is
\begin{equation*}
	  \dot H^{\nicefrac{3}{2}}(\mathbb R)\not \hookrightarrow \dot W^{1,\infty}(\mathbb R),
  \qquad  \text{equivalently} \qquad
  H^{\nicefrac{1}{2}}(\mathbb R) \not \hookrightarrow L^\infty(\mathbb R).
\end{equation*}
The critical Besov refinement $B^{\nicefrac{3}{2}}_{2,1}$ does control the Lipschitz norm, whereas $B^{\nicefrac{3}{2}}_{2,r}$ with $r>1$ does not; see \cite{BahouriCheminDanchin2011} for the underlying endpoint embeddings and paradifferential estimates.

Several forms of low-regularity instability are already known. On the circle, local well-posedness and nonuniform dependence were studied by Holliman \cite{Holliman2010}, and Holmes and Tiglay proved nonuniform continuity of the data-to-solution map in periodic Besov spaces \cite{HolmesTiglay2018}. 
On the real line, Ye and Yin  \cite{YeYin2022} introduced mixed spaces adapted to the fact that the one-sided primitive in \eqref{HS} does not need belong to a classical $L^p$-based inhomogeneous space, and obtained local well-posedness, a blow-up criterion and ill-posedness (in the sense of norm inflation) in  critical spaces.    
 More recently, Guo, Ye and Yin \cite{GuoYeYin2024} proved a sharp ill-posedness result in mixed Besov spaces, driven by a low-frequency obstruction and the absence of the energy condition $\partial_x u \in L^2$.  
 
The question considered here is different: our target framework remains within the natural energy class, and we are interested in whether the global dissipative semigroup preserves the critical $\dot H^{\nicefrac{3}{2}}$ and subcritical $\dot H^{s}$, $s\in (1,\nicefrac{3}{2}$),  Sobolev
regularities.
Our main result gives a negative answer in a strong form:

\begin{thm}[Strong failure of regularity persistence]\label{thm:main}
For any $s\in (1, \nicefrac {3}{2}]$, there exists an initial datum
\begin{equation*}
  u_0\in L^\infty(\R) \cap \dot H^1(\R) \cap \dot H^{s}(\R)
\end{equation*}
for which the corresponding unique global dissipative solution $u$ of \eqref{HS} satisfies
\begin{equation}\label{main}
  \sup_{0<t<T}\|u(t,\cdot)\|_{\dot H^{s}}=\infty,
  \qquad\text{for every }T>0.
\end{equation}
In particular, $u\notin C([0,T];\dot H^{s}(\R))$ for every $T>0$.
\end{thm}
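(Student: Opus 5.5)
The plan follows the two regimes announced in the abstract; both rest on the explicit Lagrangian description \eqref{riccati-intro} along characteristics.

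\textbf{Single-profile inflation.} Take a fixed compactly supported profile $\phi$ whose slope $\phi'$ attains a strict negative minimum $-m$. Before breaking, \eqref{riccati-intro} gives $q(t,X)=2q_0/(2+tq_0)$, and the Jacobian is $\partial_\xi X=(1+tq_0/2)^2$. Changing variables in $\int q^2\,dx$ and in a fractional Gagliardo-type expression for $\|q(t)\|_{\dot H^{s-1}}$, I expect, as $t\uparrow T_*=2/m$ and for a nondegenerate (quadratic) minimum,
\[
\|u(t)\|_{\dot H^s}\gtrsim (T_*-t)^{-\alpha(s)}\to\infty ,
\]
with some $\alpha(s)>0$ for every $s>1$. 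Indeed $q$ has height $\sim (T_*-t)^{-1}$ on an Eulerian width $\sim (T_*-t)^{3/2}$, so $\|q\|_{\dot H^{s-1}}^2\sim (T_*-t)^{-2}(T_*-t)^{3/2}(T_*-t)^{-3(s-1)}$, which blows up exactly when $s>1$ (the case $s=1$ is energy conservation). The rescaling $u_\lambda(x)=\lambda^{a}\phi(x/\lambda)$ has breaking time $2\lambda^{1-a}/m$ and $\dot H^s$ norm $\lambda^{a+1/2-s}$.

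\textbf{Superposition.} For $1<s<3/2$, set $u_0=\sum_k \lambda_k^{a}\phi((x-x_k)/\lambda_k)$ with disjoint supports placed far to the right ($x_k$ increasing), and choose $a\in(s-\tfrac12,1)$, which is possible because $s<3/2$. Then $\lambda_k\to0$ gives breaking times $T_k\to0$ and summable $\dot H^s$ and $\dot H^1$ norms (for $\dot H^1$ we need $a>1/2$), while the sup norm stays bounded. For $s=3/2$ the scaling gives no gain, so I would instead let the $k$-th profile itself be a logarithmic multiscale sum $\sum_{j\in J_k} j^{-1}\,\psi(2^{j}(x-x_k))$-type combination. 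Such a profile has $\dot H^{3/2}$ norm squared $\sim\sum_{J_k} j^{-2}$, which is summable over $k$, yet its most negative slope can be made of size $\gg k$ by aligning the slopes constructively at one point. This forces $T_k\to0$, and the blowup as $t\to T_k$ is still infinite by the single-profile computation. Compact support is kept throughout.

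\textbf{Localization and transfer.} Because of the one-sided normalization \eqref{HS}, the right-hand side at $x$ depends only on $\partial_x u$ to the left. Hence, on the support of the $k$-th bump (before any interaction, using finite propagation speed given the $L^\infty$ bound), the dissipative solution equals that bump's solution plus a spatially constant-in-$x$ drift $c_k(t)=\tfrac12\int_{-\infty}^{\text{left}}(\partial_xu)^2$ from the bumps to its left. A constant shift merely translates the profile (Galilean invariance), so its slopes evolve exactly as for the isolated bump. This is the one-sided localization principle: it needs the supports to stay separated up to time $T_k$, which holds if the gaps dominate $T_k\cdot\|u\|_{L^\infty}$. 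I then bound $\|u(t)\|_{\dot H^s}$ from below by the contribution of the $k$-th piece minus cross terms via almost orthogonality. For $s<3/2$ the Gagliardo kernel $|x-y|^{-1-2(s-1)}$ is integrable away from the diagonal, so large separations make the cross terms small; pieces already broken are handled with the dissipative energy bound. Letting $t\uparrow T_k$ for $k$ large gives \eqref{main} on every $[0,T]$.

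\textbf{Main obstacle.} The hard step is the critical case: proving the multiscale profile both has controlled $\dot H^{3/2}$ norm and breaks at time $\to0$. Alongside this, the almost-orthogonality must survive after earlier bumps have broken and dissipated, when their solutions are only in the energy class; there I would use that only $\dot H^1$ and $L^\infty$ control of those pieces enters the off-diagonal terms.
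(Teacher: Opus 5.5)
\textbf{The gap: the single-profile step fails near $s=1$.} Your subcritical argument rests on the claim that, when $\phi'$ has a nondegenerate (quadratic) minimum, $\|u(t)\|_{\dot H^s}\to\infty$ as $t\uparrow T_*$ for every $s>1$. That claim is false near $s=1$, because the Eulerian width is miscomputed.

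Write $q_0\approx -m+c(\xi-\xi_0)^2$ and $\epsilon\bydef 1-\frac{tm}{2}$. The labels where $|q|\sim\epsilon^{-1}$ are $|\xi-\xi_0|\lesssim\epsilon^{1/2}$. There the Jacobian $(1+\frac{tq_0}{2})^2$ is $\sim\epsilon^2$, not $\epsilon$, so the Eulerian width is $\epsilon^{5/2}$, not $\epsilon^{3/2}$. Your own formula already signals the error: at $s=1$ it gives $(T_*-t)^{-1/2}$, which contradicts conservation of $\|q\|_{L^2}$ before breaking.

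With the correct width, the core contributes $\epsilon^{-2}\epsilon^{5/2}\epsilon^{-5(s-1)}=\epsilon^{\frac12-5(s-1)}$ to $\|q\|_{\dot H^{s-1}}^2$. This diverges only for $s>\nicefrac{11}{10}$. It is not just a lossy lower bound. At $t=T_*$ one has $x-x_0\propto(\xi-\xi_0)^5$ and $q\propto-(\xi-\xi_0)^{-2}$, so $q(T_*,x)\simeq -C|x-x_0|^{-2/5}$. That function lies locally in $H^{r}$ for $r<\nicefrac1{10}$, and the self-similar core contributes only $\epsilon^{\frac12-5(s-1)}\to0$. Hence for $1<s<\nicefrac{11}{10}$ the single-profile $\dot H^s$ norm stays bounded on $[0,T_*)$. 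More generally, a minimum of order $2k$ gives the profile $|x-x_0|^{-2k/(4k+1)}$ and blow-up only for $s>1+\frac{1}{2(4k+1)}$.

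The scaling identity $\|u_\lambda(t)\|_{\dot H^s}=\lambda^{a+\frac12-s}\|u(\lambda^{a-1}t)\|_{\dot H^s}$ then shows that your rescaled bumps stay uniformly small in $\dot H^s$ up to their breaking times. So letting $t\uparrow T_k$ produces no inflation, and the construction itself, not just the estimate, fails for $s$ close to $1$.

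\textbf{The fix, and what survives.} The missing ingredient is the paper's flat negative plateau: $Q=-1$ on an interval $R$, with $Q\geqslant -1$. Lemma \ref{lem:active:intervals} then gives $q=-A/(1-\frac{At}{2})$ exactly on an Eulerian interval of length $(1-\frac{At}{2})^2|R|$, i.e.\ height $\epsilon^{-1}$ on width $\epsilon^2$. Hence $\|q(t)\|_{L^p}^p\geqslant |R|A^p(1-\frac{At}{2})^{2-p}\to\infty$ with $p=\frac{2}{3-2s}>2$, and $\dot H^{s-1}\hookrightarrow L^p$ gives blow-up for every $s\in(1,\nicefrac32)$. This $L^p$ lower bound localizes trivially, so no almost-orthogonality is needed in that regime.

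With this replacement, the rest of your plan is sound and partly a genuine alternative to the paper:
\begin{itemize}
\item Your Galilean reduction is exactly what \eqref{charX}--\eqref{charU} give: for labels in the $k$-th bump, $X$ and $u\circ X$ differ from those of the isolated bump by functions of $t$ alone.
\item Restricting the Gagliardo integral to $J\times J$ around the $k$-th bump gives the lower bound up to an error $\lesssim\|q_k\|_{L^2}^2 d^{-2(s-1)}$, where $d$ is the gap margin. This replaces the paper's increments $z_n$ and Lemma \ref{lem:separated}.
\item For $s=\nicefrac32$, your physical-space multiscale profile also works, but the summands must be normalized as $j^{-1}2^{-j}\psi(2^j(x-x_k))$; as written, $\|\psi(2^j\cdot)\|_{\dot H^{3/2}}\sim 2^j$. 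The blow-up as $t\uparrow T_k$ should come from a robust argument such as Lemma \ref{lem:critical-breaking}, not from the flawed single-profile computation.
\end{itemize}
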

 
\begin{rem}[Well-posedness vs Ill-posedness]
Setting $X^s \bydef L^\infty(\mathbb R)\cap \dot H^1(\mathbb R) \cap \dot H^{s}(\mathbb R)$,   the Hunter--Saxton equation is locally well-posed in $X^s$ for $s> \nicefrac {3}{2}$ due to classical transport estimates (see Proposition \ref{prop:propagation}). 
Thus, given its  well known  global well-posedness in the energy space $E=X^1$  from \cite{BressanConstantin2005,Dafermos2011}, Theorem \ref{thm:main} completes the picture of (strong) well-posedness vs ill-posedness of the Hunter--Saxton equation by establishing the non-existence of solutions in $C([0,T];X^{s})$, for $s\in (1, \nicefrac {3}{2}]$.
\end{rem}

 \begin{rem}[Novelty compared to recent work]

To the best of our knowledge, Theorem \ref{thm:main} provides the first ill-posedness  result for the Hunter--Saxton equation in a subcritical class of solutions satisfying the natural $\dot H^1$ energy bound. As for the critical regime (corresponding to $s=\nicefrac{3}{2}$), it  provides an improvement of the ill-posedness statements to some recent results. More precisely, first, a norm inflation phenomenon was established in \cite{YeYin2022} for initial data in  the space $L^\infty(\mathbb R)\cap \dot H^1(\mathbb R) \cap \dot B^{1+\nicefrac{1}{p}}_{p,q} (\mathbb R)$, for $p\in [1,\infty)$ and $q\in (1,\infty]$. Our Theorem \ref{thm:main} further promotes this statement to a strong ill-posedness (non-existence) result in case $p=q=2$. Due to the trivial embedding of Besov spaces, this immediately implies the same non-existence result for data in $L^\infty(\mathbb R)\cap \dot H^1(\mathbb R) \cap \dot B^{1+\nicefrac{1}{p}}_{p,q} (\mathbb R)$, for $p\in [1,2]$ and $q\in (1,2]$. Note, however, that the same lines of our proof below could be adapted to recover the remaining range of parameters $p\in (2,\infty)$ and $q\in (2,\infty]$. For the sake of simplicity, we choose  not to pursue this generalization here. Second, another comparable recent finding was established in 
 \cite{GuoYeYin2024}, where the authors proved a non-existence result for  \eqref{HS} in a critical class of solutions, namely $u_0\in  L^\infty(\mathbb R) \cap \dot B^{1+\nicefrac{1}{p}}_{p,1} (\mathbb R) \setminus \dot H^1(\mathbb R)$ with $p\in [1,\infty]$. In this case, they show that  no such an initial datum would generate a strong solution. There, the missing energy space $\dot H^1$ is the key point for the ill-posedness, which forces the right-hand side of \eqref{HS} to be unbounded. 
 
 The failure established by Theorem \ref{thm:main} is not merely a loss of uniform continuity of the flow map; the selected global solution instantaneously leaves the critical Sobolev class. 
We stress that this formulation is specific to the dissipative continuation; it does not assert   non-existence among all possible weak solutions. Our constructions of the initial data, and thus the proof of Theorem \ref{thm:main}, is  different compared to prior works,  and will be discussed next.
 	
 \end{rem}


\subsection*{Strategy of proof}
The general strategy is to construct a single initial datum as an infinite superposition of carefully designed, widely separated bubbles whose breaking times accumulate at zero. This approach is in the spirit of bubble-superposition arguments used in ill-posedness constructions for the Euler equations \cite{BL15}. In both regimes, the underlying dynamical mechanism is the same: large negative initial slopes are amplified by the Riccati evolution along dissipative characteristics. However, the construction of the bubbles and the analysis of their evolution differ substantially between the subcritical range $s\in(1,\nicefrac32)$ and the critical case $s=\nicefrac32$. We now briefly describe their common structure and the decisive differences between the two regimes.

We will rely on the  explicit representation of the unique global dissipative solution of \eqref{HS}, which was introduced by Bressan amd Constantin in \cite{BressanConstantin2005} (see also Dafermos \cite{Dafermos2011}).  
The brief statement to recall here is that, for $t>0$, introducing the active set 
\begin{equation}\label{It}
  I_t \bydef \{\xi\in\R:u_0'(\xi)\text{ exists and } u_0'(\xi)>-2/t\},  \qquad I_0 \bydef \R,
\end{equation}
the unique dissipative characteristic is defined, for every position $\xi\in \mathbb R$, by
\begin{equation}\label{charX}
	X(t,\xi) \bydef \xi+t u_0(\xi)   + \frac12 \int_0^t(t-\tau) \int_{I_\tau\cap(-\infty,\xi)}u_0'(\zeta)^2 \dd \zeta \dd \tau,
\end{equation}
while the global unique dissipative solution of \eqref{HS} is represented along these characteristics by the formula 
\begin{equation}\label{charU}
	u(t,X(t,\xi)) = u_0(\xi)  + \frac12\int_0^t \int_{I_\tau\cap(-\infty,\xi)}u_0'(\zeta)^2 \dd \zeta \dd \tau.
\end{equation} 

The proof of Theorem \ref{thm:main} takes substantially different forms in the subcritical range $s\in(1,\nicefrac{3}{2})$ and at the critical endpoint $s=\nicefrac{3}{2}$. 
Indeed, although both arguments exploit a similar dynamical mechanism, based on the observation that large negative initial slopes are amplified by the Riccati evolution along characteristics, the corresponding constructions of the initial data are fundamentally different.

In the subcritical range $s\in(1,\nicefrac{3}{2})$, the initial datum is constructed as a superposition of single-scale localized bubbles of the form
\begin{equation*}
	u_{0,n}(x)
	=
	A_n\ell_n
	W\left(\frac{x-y_n}{\ell_n}\right),
	\qquad x\in\mathbb R,
\end{equation*}
where $W\in C_c^\infty(\mathbb R)$ is chosen so that its derivative $Q=W'$ has a negative plateau:
\begin{equation*}
	Q=-1
	\qquad\text{on a nontrivial interval}.
\end{equation*}
In this regime, the scaling relation
\begin{equation*}
	\norm{u_{0,n}}_{\dot H^s(\mathbb R)}
	\sim
	A_n\ell_n^{\frac32-s}
\end{equation*}
is the key point. Since $\frac32-s>0$, one may choose $A_n\gg1$, thereby producing a large negative slope, while keeping the $\dot H^s$-norm small by taking $\ell_n\ll1$. 
The exact characteristic formula for the dissipative solution then propagates the initial negative plateau and produces the required norm inflation. 
In fact, the subcritical argument yields the stronger conclusion that the instability already occurs in the smaller space
\begin{equation*}
	\dot W^{1,p_s}(\mathbb R),
	\qquad
	p_s\bydef \frac{2}{3-2s},
\end{equation*}
in the sense of Sobolev embedding
\begin{equation*}
	\dot H^s(\mathbb R)
	\hookrightarrow
	\dot W^{1,p_s}(\mathbb R).
\end{equation*}

At the critical regime corresponding to $s=\nicefrac{3}{2}$, the preceding dilation argument no longer provides any smallness, since
\begin{equation*}
	\| A_n\ell W (\cdot/\ell)\|_{\dot H^{\frac{3}{2}}}
	\sim A_n
\end{equation*}
is invariant under the spatial scale $\ell$. 
The critical construction must therefore exploit a different feature, that also needs to be consistent with failure of the endpoint embedding
\begin{equation*}
	\dot H^{\frac{3}{2}}(\mathbb R)
\hookrightarrow
\dot W^{1,\infty}(\mathbb R).
\end{equation*}
More precisely, we use a logarithmically weighted multiscale Fourier profile, subsequently localized in physical space, to construct compactly supported smooth functions that are arbitrarily small in
\begin{equation*}
	L^\infty(\mathbb R)\cap\dot H^1(\mathbb R)\cap\dot H^{3/2}(\mathbb R)
\end{equation*}
but have increasingly large negative slopes. 
By the Riccati formula for $\partial_xu$, these profiles generate breaking times converging to zero. 
The precise construction is given in Lemmas \ref{lem:frequency-profile} and \ref{lem:bubbles}.

The critical profiles are then translated to rapidly separated locations and assembled into a single datum. 
To analyze the corresponding dissipative solution, we consider the solutions generated by successive partial sums of the initial bubbles and introduce the increments between two consecutive partial solutions. 
These increments are not themselves the solutions generated by the individual bubbles. 
Nevertheless, the one-sided domain of dependence principle shows that their spatial derivatives remain supported in uniformly bounded and mutually separated intervals. 
It also provides a pointwise telescoping representation of the full dissipative solution in terms of these increments. 
These localization properties follow from the explicit dissipative characteristic representation and are established in Section \ref{sec:localization}.

Finally, we observe that when a partial solution reaches its breaking time, the preceding partial solution is still smooth. 
A logarithmic continuation argument shows then that wave breaking forces the $\dot H^{1/2}$-seminorm of the corresponding differentiated increment to become unbounded. 
Since the increments are exponentially separated, an almost-orthogonality estimate in $\dot H^{1/2}(\mathbb R)$ transfers this inflation to the complete solution, and, subsequently, we conclude that the $\dot H^{3/2}$-norm of the dissipative solution is unbounded on every positive time interval.

\subsection*{Organization of the paper}

 Section \ref{sec:prelim} records the persistence of
supercritical regularity and the explicit wave-breaking criterion.  
Then, in Section \ref{section:subcritical},  we prove Theorem \ref{thm:main} in the subcritical regime corresponding to the case $s\in (1,\nicefrac{3}{2})$. 
To that end, we first establish the core lemma about the time-evolution of an initial negative plateau, then  we construct the precise initial data  and, eventually, discuss the way it generates instability of all subcritical norms due to the time-evolution of a superposition of a rescaled negative plateau.

Finally, Section \ref{section:critical} concerns the critical regime corresponding to the case $s= \nicefrac{3}{2}$. 
There, we construct the endpoint profiles and prove the separated-support lemma used to sum their critical seminorms. 
We also establish the required one-sided localization for dissipative solutions and complete the superposition argument, leading at the end to the proof of the Theorem \ref{thm:main} in the case $s=\nicefrac{3}{2}$.

\section{Preliminaries and wave breaking}\label{sec:prelim}

In this section, we discuss  two essential ingredients supporting the sharpness of the our main results. 
The first one (Proposition \ref{prop:propagation}) is about the local-in-time persistence of super-critical Sobolev regularities by dissipative solutions of \eqref{HS}, while the second one (Proposition \ref{prop:blowup time}) provides a precise expression of the finite time of blow-up of the Lipschitz norm of that local solution, for specific initial data. The results of this section are not new, we outline their details of proof, however, for the sake of completeness and convenience.

We use homogeneous Sobolev seminorms defined through the Fourier transform.
For $0<s<1$, we also use the equivalent Gagliardo seminorm
\begin{equation*}
  \|f\|_{\dot H^s(\mathbb R)}^2
  \simeq \iint_{\R^2}
  \frac{|f(x)-f(y)|^2}{|x-y|^{1+2s}}\dd x\dd y.
\end{equation*}

We will also need to homogeneous Sobolev spaces $\dot W^{1,p}(\mathbb R)$, for $p\in [2,\infty]$, which are endowed by their seminorms 
\begin{equation*}
	\norm {f}_{\dot W^{1,p}(\mathbb R)} \bydef \norm {f'}_{L^p(\mathbb R)}.
\end{equation*}

We recall that, for smooth solutions, multiplying \eqref{q-intro} by $2q$ gives
\begin{equation*}
  \partial_t(q^2)+\partial_x(uq^2)=0.
\end{equation*}
Thus the $\dot H^1$ energy is conserved before the blow-up and is non-increasing
for the global dissipative solutions.  Moreover, due  to the transport structure above, we have the following local persistence of higher initial Sobolev regularities. Note that a similar statement can be established in the case of initial data in $L^\infty (\mathbb R) \cap \dot H^1(\mathbb R) \cap \dot B^{1+\nicefrac{1}{p}}_{p,1}(\mathbb R)$, for any $p\in [1,\infty)$. See \cite[Theorem 1.1]{GuoYeYin2024}.

\begin{pro}[Supercritical persistence of regularity]\label{prop:propagation}
Let $s>\nicefrac{3}{2}$ and $u_0$ be a given initial datum in $ L^\infty \cap \dot H^1 \cap \dot H^s(\R)$.  
Then, there exists $T>0$ such that the dissipative solution $u$ of \eqref{HS} is the unique strong solution on $[0,T]$ and
\begin{equation}\label{propagation} 
  \sup_{0 \leqslant t \leqslant T}\|u(t,\cdot)\|_{\dot H^s}
  \leqslant 2\|u_0\|_{\dot H^s}.
\end{equation}
Furthermore, whenever
\begin{equation*}
\int_0^T\| \partial_x u(t,\cdot)\|_{L^\infty} \dd t < \infty,	
\end{equation*}
this solution can be continued beyond $T$.
\end{pro}

\begin{proof} We only outline the proof for $s<2$, for persistence of higher regularities can be established subsequently in a similar way. 
Set $r=s-1\in (\nicefrac{1}{2}, 1)$ and $q=\partial_x u$.  
A standard Friedrich's approximation of \eqref{q-intro}, followed by the classical one-dimensional transport estimate (see \cite[Theorem 3.14]{BahouriCheminDanchin2011}, for instance), yields the a priori control 
\begin{equation}\label{q-estimate}
  \frac{\dd\ }{\dd t} \|q(t,\cdot)\|_{H^r}  \leqslant C \|q(t,\cdot)\|_{L^\infty} \|q(t,\cdot)\|_{H^r},
\end{equation}
for all $t>0$  for which the Friedrich's approximation is valid. 
Note that the preceding inequality is a result of the use of a commutator estimate (see \cite[Section 2.10]{BahouriCheminDanchin2011}, for instance), which particularly  allows us to avoid having any extra derivatives on $q$ in the right-hand side.  
Since $H^r(\R)$ is an algebra and embeds into $L^\infty(\R)$, we further deduce that 
\begin{equation*}
	\frac{\dd\ }{\dd t} \|q(t,\cdot)\|_{H^r} \leqslant C \|q(t,\cdot)\|_{H^r}^2.
\end{equation*}
By a standard continuation argument, choosing specifically $T \leqslant (2C\|q_0\|_{H^r})^{-1}$, gives the uniform claimed estimate
\begin{equation*}
	\|q(t,\cdot)\|_{H^r} \leqslant 2 \|q_0\|_{H^r},
\end{equation*}
for all $t\leqslant T$.
 
 As soon as this a priori bound is obtained for the Friedrich's sequence, one can deduce in a routine way the existence, uniqueness and continuity of the local solution. 
 Indeed, this is a consequence of the fact that Friedrich's sequence is Cauchy in one lower norm, yielding compactness and, by interpolation, it follows that the solutions are in $C([0,T];H^r)$. 
 Moreover, the same transport estimate derived above applied to
the difference of two solutions gives uniqueness and continuous dependence.
All in all, since $\|u\|_{\dot H^s} \simeq \|q\|_{\dot H^{s-1}}$, this proves \eqref{propagation}.  

To conclude, notice that the continuation criterion follows from \eqref{q-estimate} and Gronwall's lemma.  
Agreement with the dissipative solution on the strong lifespan follows from uniqueness of the dissipative characteristics, previously proved in \cite{Dafermos2011}. 
This concludes the proof of the proposition.
\end{proof}

We prove now that the maximal time of existence of the strong solution coincides with the one at which the Lipschitz and the critical $\dot H^{\nicefrac{3}{2}}$ norms of the solution breakdown.

\begin{lem}[Critical slope inflation at wave breaking]\label{lem:critical-breaking}
Let $u_0\in C_c^\infty(\R)$, and denote $u$ the corresponding strong solution of
\eqref{HS} on the maximal time  $[0,T_*)$, with $T_*<\infty$. Then
\begin{equation*}
  \limsup_{t\uparrow T_*}\|\partial_x u(t,\cdot)\|_{\dot H^{1/2}}=\infty.
\end{equation*}
\end{lem}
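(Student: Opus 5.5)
We argue by contradiction, combining a Brezis--Gallouet type logarithmic inequality with an $H^1$ energy estimate for $q=\partial_x u$ and the continuation criterion of Proposition \ref{prop:propagation}. Suppose
\begin{equation*}
  M \bydef \sup_{0\leqslant t<T_*}\|q(t,\cdot)\|_{\dot H^{1/2}}<\infty .
\end{equation*}
Since $u_0\in C_c^\infty(\R)$, Proposition \ref{prop:propagation} (applied with any $s>\nicefrac32$) gives $q\in C([0,T];H^k(\R))$ for every $T<T_*$ and every $k$. Hence all the computations below are justified on $[0,T_*)$. Moreover, the $L^2$ norm of $q$ is conserved there, since $\partial_t(q^2)+\partial_x(uq^2)=0$ and the solution is strong. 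Thus $\|q(t)\|_{L^2}=\|u_0'\|_{L^2}$ and $\|q(t)\|_{H^{1/2}}\leqslant C(M+\|u_0'\|_{L^2})\bydef M'$ uniformly on $[0,T_*)$.

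\textbf{Step 1 (logarithmic inequality).} We use the one-dimensional Brezis--Gallouet inequality. Splitting $\widehat q$ into frequencies $|\xi|\leqslant N$ and $|\xi|>N$, and applying Cauchy--Schwarz with weights $(1+|\xi|)^{1/2}$ and $(1+|\xi|)$ respectively, one obtains
\begin{equation*}
  \|q\|_{L^\infty}\leqslant C\Bigl(1+\|q\|_{H^{1/2}}\bigl(\log(e+\|q\|_{H^1})\bigr)^{1/2}\Bigr)
  \leqslant C\bigl(1+M'\,y^{1/2}\bigr),\qquad y(t)\bdydef \log\bigl(e+\|q(t)\|_{H^1}^2\bigr).
\end{equation*}
The low-frequency contribution is $\lesssim \|q\|_{H^{1/2}}\sqrt{\log N}$ and the high-frequency one is $\lesssim N^{-1/2}\|q\|_{H^1}$; we then choose $N=\|q\|_{H^1}^2$. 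This step only needs checking that the inhomogeneous $H^{1/2}$ norm is controlled, which the conserved $L^2$ norm provides.

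\textbf{Step 2 ($H^1$ estimate).} Differentiating \eqref{q-intro} gives
\begin{equation*}
  \partial_t q_x+u\,\partial_x q_x=-2q\,q_x .
\end{equation*}
Multiplying by $q_x$ and integrating by parts in the transport term yields
\begin{equation*}
  \frac{\dd}{\dd t}\|q_x\|_{L^2}^2=-3\int q\,q_x^2\leqslant 3\|q\|_{L^\infty}\|q_x\|_{L^2}^2 .
\end{equation*}
Since $\|q\|_{L^2}$ is constant, this gives $y'(t)\leqslant C\|q(t)\|_{L^\infty}\leqslant C(1+M'y^{1/2})$.

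\textbf{Step 3 (closing the argument).} The differential inequality $y'\leqslant C(1+M'y^{1/2})$ implies that $y$ grows at most quadratically in $t$. Hence $y$ stays bounded on $[0,T_*)$, so $\sup_{t<T_*}\|q\|_{H^1}<\infty$. By Step 1 we then get $\sup_{t<T_*}\|q\|_{L^\infty}<\infty$, and in particular $\int_0^{T_*}\|\partial_x u\|_{L^\infty}\dd t<\infty$. The continuation criterion of Proposition \ref{prop:propagation} now extends the strong solution beyond $T_*$, contradicting the maximality of $T_*$. Therefore $\sup_{t<T_*}\|\partial_xu\|_{\dot H^{1/2}}=\infty$. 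Since $\|q(t)\|_{\dot H^{1/2}}$ is continuous on $[0,T_*)$, and hence bounded on every compact subinterval, this supremum being infinite is the same as the claimed $\limsup_{t\uparrow T_*}=\infty$.

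The main point requiring care is Step 1. The inequality must be applied with the \emph{homogeneous} $\dot H^{1/2}$ bound, and this is legitimate only because energy conservation supplies the uniform $L^2$ control of $q$ before breaking. Once this is in place, Steps 2 and 3 are a routine Beale--Kato--Majda type argument.
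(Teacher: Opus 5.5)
Your proposal is correct and follows essentially the same route as the paper. Both arguments proceed by contradiction: a logarithmic (Brezis--Gallou\"et) inequality bounds $\|q\|_{L^\infty}$ by the assumed-bounded $H^{1/2}$ norm times a logarithm of a higher norm, an energy/transport estimate controls that higher norm, and the continuation criterion of Proposition \ref{prop:propagation} then gives the contradiction. The differences are cosmetic. You fix $\sigma=1$ and use the explicit $H^1$ identity $\frac{\dd}{\dd t}\|q_x\|_{L^2}^2=-3\int q\,q_x^2$, where the paper uses a general $H^\sigma$ commutator estimate. You close with an elementary ODE bound on $y$ instead of Osgood's lemma. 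You also make explicit the sup-versus-$\limsup$ step, which the paper leaves implicit.
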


\begin{proof}
Again, we use the notation $q=\partial_x u$, and we recall that the $L^2$ norm of $q$ is conserved before breaking.  
Suppose, for the sake of a contradiction, that 
\begin{equation*}
\|q(t,\cdot)\|_{H^{1/2}} \leqslant C_0 \quad   \text{on } [0,T_*).	
\end{equation*}
Then, for any fixed
$\sigma>1/2$, the standard transport estimate gives
\begin{equation}\label{high-q}
  \frac{\dd\ }{\dd t}  \|q(t,\cdot)\|_{H^\sigma}  \leqslant C \|q(t,\cdot)\|_{L^\infty}  \|q(t,\cdot)\|_{H^\sigma},
\end{equation}
before $T_*$. Owing to the one-dimensional logarithmic Sobolev inequality (see Proposition 2.104 from \cite{BahouriCheminDanchin2011})
\begin{equation*}
 	\norm {  q(t,\cdot)}_{L^\infty(\R)} \lesssim \norm { q(t,\cdot)}_{ H^{\frac{1}{2}}(\mathbb R)}   \log \left( e+  \frac{\norm { q(t,\cdot)}_{ H^{\sigma}(\mathbb R)}}{\norm {q(t,\cdot)}_{ H^{\frac{1}{2}}(\mathbb R)}}\right) ,
 \end{equation*}
 we infer that
\begin{equation*} 
  \|q(t,\cdot)\|_{L^\infty}  \leqslant C_{\sigma,C_0}  \Big(1+\log\bigl(e+\|q(t,\cdot)\|_{H^\sigma}\bigr)\Big).
\end{equation*}
Inserting this bound into \eqref{high-q}, and applying Osgood's lemma shows
that $\|q(t)\|_{H^\sigma}$ remains finite on $[0,T_*]$.  
This contradicts the continuation criterion in Proposition \ref{prop:propagation}.  
Thus, the inhomogeneous $H^{1/2}$ norm is unbounded. 
Since the $L^2$ norm is bounded, we conclude that its homogeneous $\dot H^{1/2}$ part is
unbounded as well, thereby completing the proof.
\end{proof}

 Next, we show that the solution previously constructed blows-up in finite time, for some well-chosen initial data (more precisely, we show that the solution does not blow-up if and only if the initial data is non-decreasing).  We also give some precise information about the blow-up solution, which will serve later on in the non-existence analysis.
 
 A crucial point here is that the blow-up time is explicitly given in terms of the Lipschitz norm of the initial data.

 \begin{pro}[Explicit blow-up time]\label{prop:blowup time}
Let $u_0$ be a smooth initial datum belonging to 
$  L^\infty(\R)\cap\dot H^1(\R)$, and let $u$ be the corresponding
strong solution on its maximal time interval $[0,T_*)$.  Then
\begin{equation}\label{Tstar}
 	T_*= 
 	\begin{cases}
 	\infty, &  \displaystyle \inf_{x\in \mathbb R} u_0'(x)	\geqslant 0, \\
 	-  \displaystyle\frac{2}{\inf_{x\in \mathbb R} u_0'(x)}, &  \displaystyle\inf_{x\in \mathbb R} u_0'(x) < 0.
 	\end{cases}
\end{equation}
\end{pro}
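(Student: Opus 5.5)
The plan is to run the classical Riccati argument along the characteristics of the strong solution, combined with the continuation criterion of Proposition \ref{prop:propagation}. Since $u_0$ is smooth and lies in $L^\infty\cap\dot H^1$, I will work in a setting where the strong solution exists; if needed I first assume $u_0'$ is bounded, so that $u_0\in\dot H^s$ for some $s>\nicefrac32$ locally, and Proposition \ref{prop:propagation} applies. On $[0,T_*)$, $u(t,\cdot)$ is Lipschitz, so the flow $\partial_t X=u(t,X)$, $X(0,\xi)=\xi$, is a $C^1$ diffeomorphism of $\R$ for each $t<T_*$. Set $q=\partial_x u$. Along $X$, \eqref{q-intro} becomes the ODE $\frac{\dd}{\dd t}q(t,X(t,\xi))=-\frac12 q(t,X(t,\xi))^2$, and its explicit solution is \eqref{riccati-intro}, $q(t,X(t,\xi))=\frac{2q_0(\xi)}{2+tq_0(\xi)}$. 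This formula is valid as long as $2+tq_0(\xi)>0$ and $t<T_*$.

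\textbf{Lower bound on $T_*$.} Let $m=\inf u_0'$. If $m\ge0$, then $0\le q(t,X(t,\xi))\le q_0(\xi)\le \|u_0'\|_{L^\infty}$ for all $t$. If $m<0$ and $t<-2/m$, then $2+tq_0\ge 2+tm>0$, and hence
\[
\frac{2m}{2+tm}\le q(t,\cdot)\le \|u_0'\|_{L^\infty}.
\]
In both cases $\int_0^T\|\partial_x u(t,\cdot)\|_{L^\infty}\dd t<\infty$ for every $T<-2/m$ (respectively every $T<\infty$). The continuation criterion of Proposition \ref{prop:propagation} then gives $T_*\ge -2/m$, or $T_*=\infty$ when $m\ge0$. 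Since $X(t,\cdot)$ is surjective, the bound on $q$ along characteristics is an $L^\infty$ bound on $q(t,\cdot)$.

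\textbf{Upper bound when $m<0$.} Pick $\xi_k$ with $q_0(\xi_k)\to m$. If $T_*>-2/m$, the strong solution is $C^1$ up to some time $t_1\in(-2/m,T_*)$. Choose $k$ so large that $-2/q_0(\xi_k)<t_1$. Along $X(\cdot,\xi_k)$ the Riccati formula forces $q(t,X(t,\xi_k))\to-\infty$ as $t\uparrow -2/q_0(\xi_k)<T_*$. This contradicts local boundedness of $\partial_x u$ on $[0,t_1]$. Hence $T_*=-2/m$.

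\textbf{Main obstacle.} The delicate step is justifying that the strong solution is regular enough for the characteristic ODE computation, and that the continuation criterion applies when only $u_0\in L^\infty\cap\dot H^1$ is smooth, without an a priori $\dot H^s$ bound for $s>\nicefrac32$. I would handle this by applying the argument to data with $u_0'\in H^{r}$ for some $r>\nicefrac12$, which covers the compactly supported smooth data used later. I would read the statement as referring to the strong solution furnished by Proposition \ref{prop:propagation}, and note in addition that the infimum is attained or approached through $\xi_k$, since $u_0'\in L^2$ decays at infinity.
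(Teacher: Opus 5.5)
Your proposal is correct and follows essentially the same route as the paper. The Riccati formula along characteristics gives an $L^\infty$ bound on $q$ for $t<-2/m$, which the continuation criterion turns into $T_*\geqslant -2/m$. Blow-up of the Riccati solution along a characteristic with $q_0(\xi)<0$ (you use a minimizing sequence $\xi_k$) then gives the reverse inequality.

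Your write-up is slightly more careful than the paper's in three places:
\begin{enumerate}
\item You invoke the continuation criterion of Proposition~\ref{prop:propagation} directly, whereas the paper cites Lemma~\ref{lem:critical-breaking} at that step.
\item You use surjectivity of $X(t,\cdot)$ to turn the bound along characteristics into a bound on $\|q(t,\cdot)\|_{L^\infty}$.
\item You make explicit that one really needs $u_0'\in H^r$ for some $r>\nicefrac12$. Mere boundedness of $u_0'$, as in your first paragraph, would not suffice to apply Proposition~\ref{prop:propagation}.
\end{enumerate}
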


\begin{proof} 	
	We recall    that  $q= \partial_x u$  is governed by the equation 
\begin{equation*}
	\partial_t q + u \partial_x q = - \tfrac12 q^2.
\end{equation*}
Letting  $X_t$ denote the flow map associated to $u$, we set  
\begin{equation*}
	h(t,\xi)\eqdef q(t,X_t(\xi)).
\end{equation*}
Then, $h$ solves
\begin{equation*}
	\partial_t h(t,\xi) = -\tfrac12 h^2(t,\xi),  \qquad h(0,\xi)=u_0'(\xi),
\end{equation*}
which can be solved explicitly to find that 
\begin{equation}\label{h_formula}
	h(t,\xi)= \frac{2 h(0,\xi)}{2+t h(0,\xi)} ,
\end{equation}
as soon as the denominator is not identically zero.

 In the case when $\inf_x u_0'(x)	\geqslant 0$, it is readily seen that \eqref{h_formula} leads to a bound for $q$ in $ L^\infty ([0,T_*) \times \R)$, hence, we deduce that $T_*=\infty$. 

Let us now examine the case when there exists $x_0 \in \R$ such that $h(0,x_0)=u_0'(x_0)<0$.  Then, it is readily seen, from \eqref{h_formula},  that  $T_* \leqslant -2/u_0'(x_0)$, whence   $T_* \leqslant \inf_{x} -2/u_0'(x)$. Let now $t<\inf_{x} -2/u_0'(x)$. Therefore, we deduce from \eqref{h_formula}, again, that $q \in L^\infty ([0,t] \times \R)$ and, eventually from Lemma \ref{lem:critical-breaking}, that $T_* >t$. All in all, we have shown that $T_*=\inf_{x} -2/u_0'(x)$. This concludes the proof of the proposition.
\end{proof} 

\section{The subcritical regime}\label{section:subcritical}

This section is devoted to the proof of the strong ill-posedness of \eqref{HS} in the subcritical spaces. For convenience, we state below the precise  statement of result to prove here, which corresponds to a slightly stronger form of Theorem \ref{thm:main} in the case Sobolev space $\dot H^{s}(\mathbb R)$ with $s\in (1,\nicefrac{3}{2})$.

\begin{thm}\label{thm:supercritical}
	For every $s\in (1, \nicefrac{3}{2})$, there exists $u_0\in C^\infty \cap L^\infty(\mathbb R) \cap  \dot H^1(\mathbb R)\cap  \dot H^{s}(\mathbb R)$ such that the corresponding unique global dissipative solution $u$ of \eqref{HS} satisfies 
	\begin{equation*}
		 \underset{{t\in (0,T)}}{ \textnormal{ess sup}} \norm {u(t,\cdot)}_{\dot H^{ s}(\mathbb R)}  = \infty , \quad \text{for all } T>0.
	\end{equation*}
\end{thm}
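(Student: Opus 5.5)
We superpose rescaled bubbles as outlined in the introduction. Fix $W\in C_c^\infty(\R)$, supported in $[-1,1]$, with $Q=W'$ satisfying $Q\equiv-1$ on $[-\nicefrac12,\nicefrac12]$ and $Q\geqslant -1$ everywhere. Set
\begin{equation*}
u_{0}=\sum_{n\geqslant1}u_{0,n},\qquad u_{0,n}(x)=A_n\ell_nW\Big(\frac{x-y_n}{\ell_n}\Big),
\end{equation*}
with $A_n\to\infty$, $\ell_n\to0$ and widely separated centers $y_n$ (say $y_n=10n$ after normalizing $\ell_n\le1$). Each bubble has disjoint support and the following sizes:
\begin{itemize}
\item $\|u_{0,n}\|_{L^\infty}\lesssim A_n\ell_n$;
\item $\|u_{0,n}\|_{\dot H^1}\sim A_n\ell_n^{1/2}$;
\item $\|u_{0,n}\|_{\dot H^s}\sim A_n\ell_n^{3/2-s}$.
\end{itemize}
We choose, for instance, $A_n=2^n$ and $\ell_n=2^{-\lambda n}$ with $\lambda(\nicefrac32-s)>2$. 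Then all three sequences are summable, and $u_0$ is smooth and lies in $L^\infty\cap\dot H^1\cap\dot H^s$. For the $\dot H^s$ norm we bound the sum by the triangle inequality, so disjointness is not even needed there. The $n$-th bubble has minimal slope $-A_n$ on the plateau $J_n=[y_n-\ell_n/2,y_n+\ell_n/2]$, so its breaking time is $t_n=2/A_n\to0$.

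\textbf{Core lemma: evolution of the plateau.} We use the explicit formulas \eqref{charX}--\eqref{charU}. For $\xi\in J_n$ and $\tau<t_n$, the point $\xi$ belongs to $I_\tau$. Moreover, the integral term in \eqref{charX} has $\xi$-derivative $\frac12\int_0^t(t-\tau)\mathbf 1_{I_\tau}(\xi)u_0'(\xi)^2\dd\tau=\frac{t^2}{4}A_n^2$ on $J_n$. Hence
\begin{equation*}
\partial_\xi X(t,\xi)=1-tA_n+\frac{t^2A_n^2}{4}=\Big(1-\frac{tA_n}{2}\Big)^2 \quad\text{on } J_n,
\end{equation*}
matching the Riccati law \eqref{riccati-intro}. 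Similarly, $\partial_\xi[u(t,X)]=-A_n+\frac t2A_n^2=-A_n(1-\frac{tA_n}2)$. The chain rule then gives
\begin{equation*}
\partial_xu(t,\cdot)=-\frac{A_n}{1-tA_n/2}=\frac{-2A_n}{2-tA_n}\quad\text{on } X(t,J_n),
\end{equation*}
an interval of length $\ell_n(1-tA_n/2)^2$. This is a direct computation valid in the dissipative framework, with no smoothness of the whole solution required. One should also check that $X(t,\cdot)$ is nondecreasing and injective on $J_n$, which is clear from the square formula, so that $X(t,J_n)$ really is an interval on which $u(t)$ is given by this formula.

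\textbf{Norm inflation.} Using the embedding $\dot H^s\hookrightarrow\dot W^{1,p_s}$ with $p_s=2/(3-2s)\in(2,\infty)$, it suffices to show that $\|\partial_xu(t)\|_{L^{p_s}}$ is unbounded on $(0,T)$. Restricting to $X(t,J_n)$ gives
\begin{equation*}
\|\partial_xu(t)\|_{L^{p}}^{p}\geqslant \Big(\frac{A_n}{\varepsilon}\Big)^{p}\ell_n\varepsilon^2,\qquad \varepsilon=1-\frac{tA_n}{2}.
\end{equation*}
Since $p>2$, this tends to $\infty$ as $\varepsilon\downarrow0$, that is, as $t\uparrow t_n$. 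The blow-up is localized on a time interval just below $t_n$ of positive measure, so the essential supremum is infinite. Given $T>0$, we pick $n$ with $t_n<T$, which yields the claim.

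\textbf{Main obstacle.} The delicate point is justifying that the other bubbles, and the nonlocal term $\int_{-\infty}^x$, do not spoil the computation on $J_n$. In \eqref{charX}, the only $\xi$-dependence of the integral term near $J_n$ comes from $u_0'(\xi)^2\mathbf 1_{I_\tau}(\xi)$. Earlier bubbles (those to the left) contribute merely a $\xi$-independent constant on $J_n$, which shifts $X$ and $u$ but not their derivatives. Thus the formulas above hold exactly, regardless of whether other bubbles have already broken.

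I would also double-check two further points:
\begin{itemize}
\item that \eqref{charU} together with monotonicity of $X$ identifies $u(t,\cdot)$ pointwise on $X(t,J_n)$, which uses the fact that points in $J_n$ stay in $I_\tau$ for $\tau<t_n$;
\item that the embedding is applied in the contrapositive form $\|u\|_{\dot H^s}\gtrsim\|\partial_xu\|_{L^{p_s}}$, valid for $u(t)\in\dot H^1$. If $u(t)\notin\dot H^s$, the norm is infinite anyway.
\end{itemize}
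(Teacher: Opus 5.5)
Your proposal is correct and follows essentially the same route as the paper: the same rescaled plateau bubbles, the same computation $\partial_\xi X=(1-tA_n/2)^2$ and $\partial_x u=-A_n/(1-tA_n/2)$ on the image of the plateau via \eqref{charX}--\eqref{charU}, and the same $L^{p_s}$ lower bound combined with $\dot H^s\hookrightarrow\dot W^{1,p_s}$. Your last step is leaner than the paper's: for one fixed $n$ with $t_n<T$, the bound $A_n^{p_s}\ell_n(1-tA_n/2)^{2-p_s}$ already diverges as $t\uparrow t_n$, so the paper's auxiliary $\rho_n,\delta_n$ and windows $\mathcal I_n$ are not needed; also, your remark that bubbles to the left only add a $\xi$-independent constant on $J_n$ is a cleaner justification than the paper's appeal to Lemma~\ref{lem:active:intervals}, whose hypothesis $A=-\inf q_0$ does not literally hold for the superposed datum.
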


 For a clean presentation of the proof of Theorem \ref{thm:supercritical}, we first establish a separate key elementary lemma about the time-evolution of a plateau before breaking time.  We recall that the slope of the solution $u$ is always denoted by $q$. Moreover, the active set $I_t$, the dissipative characteristics $X(t,\cdot)$ and the representation of the unique dissipative solution of \eqref{HS} are respectively given by \eqref{It}, \eqref{charX} and \eqref{charU}.

\begin{lem}[Time evolution of a plateau before breaking time]\label{lem:active:intervals}
	Assume that the initial data $u_0$ is given such that $q_0 =-A$ on some space interval $R$, for some $A>0$, where 
	\begin{equation*}
		A= -\inf_{x\in \mathbb R} q(x).
	\end{equation*}
	Then, for every $t\in (0,\nicefrac {2}{A})$, it holds that $R\subset I_t$, and the dissipative characteristic map $X(t,\cdot)$ and the solution along these characteristics  satisfy  
	\begin{equation*}
		\partial_\xi X (t,\xi) = \left( 1- \frac{tA}{2}\right)^2 \qquad \text{ and } \qquad q(t,X(t,\xi))= - \frac{A}{1-\tfrac{At}{2}},
	\end{equation*} 
	for a.e $(t,\xi) \in (0,\nicefrac {2}{A}) \times  R$.
	Moreover, it holds that  
	\begin{equation*}
		|X(t,R)|=\left( 1- \frac{tA}{2}\right)^2 |R|.
	\end{equation*}
\end{lem}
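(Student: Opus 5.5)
Since $A=-\inf q_0$, every $\xi$ satisfies $q_0(\xi)\geqslant -A$. For $t<2/A$ this gives $q_0(\xi)\geqslant -A>-2/t$ wherever $q_0$ exists. So $I_\tau=\mathbb R$ (up to the null set where $u_0'$ fails to exist) for every $\tau\in(0,t]$, and in particular $R\subset I_t$.

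With this in hand, the integrals in \eqref{charX} and \eqref{charU} run over the full half-line $(-\infty,\xi)$, so no energy has yet been removed. Differentiating \eqref{charX} in $\xi$ (Lebesgue differentiation, valid a.e.) then gives
\begin{equation*}
\partial_\xi X(t,\xi)=1+t\,q_0(\xi)+\tfrac{t^2}{4}q_0(\xi)^2=\Bigl(1+\tfrac{t q_0(\xi)}{2}\Bigr)^2 .
\end{equation*}
On $R$ we have $q_0=-A$, which yields $\partial_\xi X=(1-tA/2)^2$. This quantity is positive for $t<2/A$, so $X(t,\cdot)$ is strictly increasing there.

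For the slope, we differentiate \eqref{charU} in $\xi$ to get
\begin{equation*}
\partial_\xi\bigl[u(t,X(t,\xi))\bigr]=q_0(\xi)+\tfrac t2 q_0(\xi)^2=q_0(\xi)\Bigl(1+\tfrac{tq_0(\xi)}2\Bigr).
\end{equation*}
By the chain rule, $q(t,X)\,\partial_\xi X=q_0(1+tq_0/2)$. Dividing by $\partial_\xi X\neq 0$ gives $q(t,X(t,\xi))=q_0/(1+tq_0/2)$, which matches \eqref{riccati-intro}. On $R$ this equals $-A/(1-At/2)$. Alternatively, we could note that for $t<2/A=T_*$ (Proposition \ref{prop:blowup time}) the solution is strong, and cite \eqref{h_formula} directly. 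The derivative computation above is more self-contained, however, since it does not require smoothness of $u_0$.

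Finally, $X(t,\cdot)$ is monotone and absolutely continuous on $R$. Assuming $R$ is an interval, the change of variables gives $|X(t,R)|=\int_R\partial_\xi X\,d\xi=(1-tA/2)^2|R|$.

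The main obstacle is justifying the a.e. differentiation of the double integral in \eqref{charX}. This requires noting that, for $t<2/A$, $I_\tau\cap(-\infty,\xi)$ coincides with $(-\infty,\xi)$ up to a null set. The inner integral $\int_{-\infty}^\xi u_0'^2$ is then absolutely continuous in $\xi$ with derivative $q_0(\xi)^2$ a.e., and dominated convergence lets us differentiate under $\int_0^t(t-\tau)\,d\tau$. We also need $\partial_\xi\bigl[u(t,X)\bigr]$ to make sense as the derivative of a composition. Strict monotonicity of $X$ handles this, because it makes $X(t,\cdot)$ a bi-Lipschitz change of variables on $R$.
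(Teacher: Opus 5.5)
Your proposal is correct and follows the same route as the paper. For $t<2/A$ the restriction to $I_\tau$ is inactive, so \eqref{charX}--\eqref{charU} reduce to their pre-breaking forms with $\frac{t^2}{4}\int_{-\infty}^{\xi}u_0'^2$ and $\frac{t}{2}\int_{-\infty}^{\xi}u_0'^2$; differentiating in $\xi$ and integrating $\partial_\xi X$ over $R$ then gives all three claims. Your additional justifications only fill in details the paper leaves implicit:
\begin{itemize}
\item why $I_\tau$ is full up to a null set;
\item the chain rule, via $X(t,\cdot)$ being affine on $R$;
\item positivity of $1+tq_0/2$.
\end{itemize}
The dominated-convergence step is superfluous, since the inner integral no longer depends on $\tau$ and simply factors out.
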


\begin{proof}
	The main point to use here is that, before the breaking time $t= \nicefrac {2}{A} $, the identities  \eqref{charX} and \eqref{charU}  are equivalently given by 
\begin{equation*} 
	X(t,\xi)  \bydef \xi+t u_0(\xi)   +\frac {t^2}{4} \int_{ -\infty }^{\xi}\big(u_0'(\zeta)\big)^2 \dd \zeta
\end{equation*} 
and 
\begin{equation*} 
	u(t,X(t,\xi))  =u_0(\xi)   +\frac t2      \int_{ -\infty}^{\xi}\big(u_0'(\zeta)\big)^2\dd\zeta.
\end{equation*} 
Thus, the first identities in the statement of the lemma follow by a direct differentiating in the $\xi$ variable, for a.e $\xi \in R$. As for the length of the interval $X(t,R)$, it is obtained from integrating $\partial_\xi X(t,\cdot)$ over $R$.
\end{proof}

We are now in a position to prove Theorem \ref{thm:supercritical} and, consequently, Theorem  \ref{thm:main} in the case $s\in (1,\nicefrac{3}{2})$.

\begin{proof}[Proof of Theorem \ref{thm:supercritical} and Theorem  \ref{thm:main} for $s\in (1,\nicefrac{3}{2})$]
	We split the proof into two steps for simplicity.

\subsubsection*{The initial data.}

 We introduce   
 \begin{equation*}
	  p\bydef \frac{2}{3-2s} \in (2,\infty),
\end{equation*}
which corresponds to the Lebesgue exponent from the Sobolev embedding 
\begin{equation*}
	\dot H^{s-1}(\mathbb R) \hookrightarrow L^p(\mathbb R).
\end{equation*}
 Next, let $Q\in C^{\infty}_c (\mathbb R)$, and $R\subset  \mathbb R$  be a non-trivial interval ($|R|>0$)  that are chosen in such a way that 
 \begin{equation*}
 	Q \geqslant -1, \quad Q|_R = -1 \quad \text{ and } \int_{\mathbb R} Q(x) \dd x = 0.
 \end{equation*}
 Accordingly, we define 
 \begin{equation*}
 	W(x)\bydef \int_{-\infty}^x Q(y) \dd y, \quad \text{ for all } x\in \mathbb R,
 \end{equation*}
 which is a $C^\infty_c (\mathbb R)$ function due to the vanishing mean property of $Q$. 
 
 Now, let $\varepsilon>0$ be a small parameter, and fix $(\rho  _n)_{n \geqslant 1}$ to be a sequence of positive small summable  numbers in the sense    that 
 \begin{equation*}
 	\sum_{n\geq 1}  \rho_{n} \leqslant \varepsilon ,
 \end{equation*} 
 Further take $(A_n)_{n\geqslant 1}$ to be a sequence tending to infinity (take $A_n= 2^{2n}$, for instance), and set 
 \begin{equation}\label{ell:def}
 	\ell_{n}\bydef \left(\frac{\rho_n}{A_n} \right)^{p}, \quad \text{for all } n \geqslant 1.
 \end{equation}
 Note that we can choose these sequences so that $\ell_n \in (0,1)$, for all $n \geqslant 1$.

Finally, we define the initial bubble and data by 
\begin{equation*}
	u_{0,n}(\cdot )\bydef A_{n} \ell _{n} W \left( \frac{\cdot- y_{n}}{\ell_{n}}\right) , \qquad u_0 \bydef \sum_{n\geqslant 1} u_{0,n},
\end{equation*}
where the translations $y_{n}\to \infty$ are chosen so that the supports of the bubbles above are pairwise disjoint and the family of supports is locally finite. Note that the corresponding slopes  are given by 
\begin{equation*}
	q_{0,n}(\cdot) = A_{n}Q \left( \frac{\cdot- y_{n}}{\ell_{n}}\right),
\end{equation*}
and, in particular, it holds that 
\begin{equation*}
	q_0=q_{0,n}=- A_{n}  \quad \text{ on } \quad  R_{n} \bydef y_{n} +\ell_{n} R,
\end{equation*}
where the first equality holds by virtue of the disjoint support of the initial bubbles. 

With the above setup, it is readily seen that 
\begin{equation*}
	\norm {u_{0,n}}_{L^\infty(\mathbb R)} + \norm {u_{0,n}}_{\dot H^1(\mathbb R)} \leqslant C_Q \rho _{n},
\end{equation*}
and
\begin{equation*}
	\norm {u_{0,n}}_{\dot H^s(\mathbb R)} =  \rho _{n} \norm W_{\dot H^s(\mathbb R)},
\end{equation*}
where we used that $\ell_{n}\in (0,1)$ and $p\in (2,\infty)$ with the definition of $\ell_n$ in \eqref{ell:def}. To conclude this step, we notice that the local finiteness of the supports gives that $u_0 \in C^\infty(\mathbb R)$, while the triangular inequality ensures that  $u_0 \in L^\infty \cap \dot H^s (\mathbb R)$, for all $s\in [1,\nicefrac{3}{2})$, with a norm that can be made arbitrary small.

\subsubsection*{Loss of regularity at the prescribed times.}

We now choose another sequence of small positive numbers $(\delta_n)_{n\geqslant 1} \subset (0,\nicefrac{1}{4})$, tending to zero, given by the relation 
\begin{equation}\label{delta:def}
	 \rho_n \delta_n ^{-1 + \frac{2}{p}} = n.
\end{equation}
We denote by $T_n$ the breaking time of the solution corresponding to the $n^{th}$ bubble, i.e.,
\begin{equation*}
	T_n\bydef \frac{2}{A_n}.
\end{equation*}
Moreover, defining the observation time interval by 
\begin{equation*}
	 \mathcal  I_n\bydef [T_n(1-2\delta_n) , T_n(1-\delta_n)] \subset (0,T_n),
\end{equation*}
we obtain, by applying Lemma \ref{lem:active:intervals} for any a.e $t\in \mathcal I_n$ and $x\in  X(t, R_{n})$,  that 
\begin{equation*}
	|q(t,x)| = \frac{A_{n}}{1-\frac{A_n t}{2}} , \qquad |X(t, R_{n})|= \left( 1-\frac{A_n t}{2}\right)^2 \ell_{n} |R|.
\end{equation*}
Notice in passing that, as soon as $t\in \mathcal I_n$, it holds that
\begin{equation}\label{delta:bound}
	\delta_n \leqslant 1-\frac{A_n t}{2} \leqslant 2\delta_n.
\end{equation}
Therefore, we find by combining \eqref{ell:def}, \eqref{delta:def} and \eqref{delta:bound} that 
\begin{equation*}
	\begin{aligned}
		\norm {q(t, \cdot)}_{L^p(\mathbb R)}^p \geqslant \int_{X(t, R_{n})} |q(t, x)|^p \dd x
		&= \left(   \frac{A_{n}}{1-\frac{A_n t}{2}}\right)^p|X(t,R_n)|
		\\
		&= |R| \ell_n A_n^p \left(  1-\frac{A_n t}{2}\right)^{2-p}
		\\
		&= |R| \rho_n^p \left(  1-\frac{A_n t}{2}\right)^{2-p}
		\\
		&\geqslant 2^{2-p} |R| \rho_n^p \delta_n ^{2-p}
		\\
		&= 2^{2-p} |R|n^p,
		\end{aligned}
\end{equation*}
for a.e $t\in \mathcal I_n$. 
By Sobolev embedding $\dot H^{s-1}\hookrightarrow L^p(\mathbb R)$, we deduce that 
\begin{equation*}
	\norm {u(t,\cdot)}_{\dot H^s(\mathbb R)} \gtrsim n, \quad \text{ for a.e } t\in \mathcal I_n.
\end{equation*}
Finally, towards a contradiction, suppose that there exists $T>0$ such that 
\begin{equation*}
	u\in L^\infty([0,T]; \dot H^s(\mathbb R)).
\end{equation*}
As 
\begin{equation*}
	\sup \{t\in \mathcal I_n \} \leqslant T_n \to 0 \quad \text{ as } n\to\infty,
\end{equation*}
we deduce that, for $T$ being already fixed, there exist $N=N(T)$ for which $\mathcal I_n \subset (0,T)$ for all $n\geqslant N$. Thus, as $\mathcal I_n$ has a positive measure, we deduce that 
\begin{equation*}
	\underset{{t\in (0,T)}}{ \textnormal{ess sup}} \norm {u(t,\cdot)}_{\dot H^s(\mathbb R)} \gtrsim n, \quad \text{ for all } n\geqslant N.
\end{equation*}
By taking $n$ to infinity, this contradicts the boundedness of $u$ in $L^\infty([0,T]; \dot H^s(\mathbb R))$, thereby completing the proof. 
\end{proof}

\section{The critical regime}\label{section:critical}

The approach to the strong ill-posedness in the critical regime corresponding to $s=\nicefrac{3}{2}$ is slightly more involving. We thus present the necessary building blocks for the ultimate proof into two separate paragraphs. The proof of the non-persistence of the critical Sobolev regularity will then be discussed at the end of this section.

\subsection{Critical profiles and separated seminorms}\label{sec:profiles}

In this section, we present the building blocks for the initial data which will be used in the proof of Theorem \ref{thm:main}. 
The construction below isolates the endpoint defect $\dot H^{3/2} \not\hookrightarrow \dot W^{1,\infty} (\mathbb R)$ while retaining compact support and small energy.

\begin{lem}[A frequency-localized profile]\label{lem:frequency-profile}
Let $b \in (1/2,1)$ be fixed. 
Then, there exist two constants $c>0$ and $C>0$ such that, for every $\delta \in (0,1)$ and every sufficiently large integer $K$, there exists a real-valued smooth function $\Phi_{\delta,K}$ satisfying
\begin{align}\label{Phi-small}
	  \|\Phi_{\delta,K}\|_{\dot H^{3/2}} + \|\Phi_{\delta,K}\|_{\dot H^1} + \|\Phi_{\delta,K}\|_{L^\infty} & 	  \leqslant C \delta,  \\ \label{Phi-large}
  -\Phi_{\delta,K}'(0) &  \geqslant c \delta K^{1-b}.
\end{align}
Moreover, for every $\sigma>0$, there exists $C_\sigma>0$, which is independent of $\delta$, such that  
\begin{equation}\label{Phi-Besov}
  \|\Phi_{\delta,K}\|_{\dot B^{3/2-\sigma}_{2,1}}
  \leqslant C_\sigma \delta.
\end{equation}
\end{lem}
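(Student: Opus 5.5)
We will construct $\Phi_{\delta,K}$ explicitly on the Fourier side as a logarithmically weighted sum of dyadic bumps, and then read off the three estimates. Fix a nonnegative, even, real $\chi\in C_c^\infty(\R)$ supported in the annulus $\{3/4\leqslant|\xi|\leqslant 8/3\}$, with $\chi>0$ on $[1,2]$. For $K$ large we set
\begin{equation*}
  \widehat{\Phi_{\delta,K}}(\xi) \bydef -\,i\,\delta\,\operatorname{sgn}(\xi)\sum_{j=1}^{K} \frac{2^{-3j/2}}{j^{b}\,K^{1/2}}\,\chi(2^{-j}\xi)\quad\text{(up to a normalizing constant)},
\end{equation*}
or an equivalent real-space form $\Phi=\delta K^{-1/2}\sum_{j\le K} j^{-b}2^{-j}\psi(2^j x)$ with $\psi$ an odd Schwartz function whose Fourier transform is supported in the annulus and satisfies $\psi'(0)=-1$. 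The function is real, smooth, and odd, and its derivative $\Phi'(0)=\frac1{2\pi}\int i\xi\widehat\Phi\,d\xi$ is a sum of terms of one sign. Here we choose the normalization so that each dyadic piece contributes $-\delta K^{-1/2} j^{-b}$ to $\Phi'(0)$. Actually, to match \eqref{Phi-large} with exponent $K^{1-b}$, we use the simpler weights $a_j=\delta j^{-b}$ and rescale $\delta$ by a harmless constant. We rely on $b>1/2$ for the square-summability $\sum_j j^{-2b}<\infty$.

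Next we take the piece $\Phi=\delta\sum_{j=1}^K j^{-b}2^{-j}\psi(2^jx)$ and estimate it. Almost orthogonality of the dyadic blocks, which have boundedly overlapping Fourier supports, gives
\begin{equation*}
\|\Phi\|_{\dot H^{3/2}}^2\simeq \delta^2\sum_{j\le K} j^{-2b}\,\|\psi\|_{\dot H^{3/2}}^2\leqslant C\delta^2
\end{equation*}
uniformly in $K$, since $2b>1$. The lower-order norms are geometric. We have $\|\Phi\|_{\dot H^1}^2\simeq\delta^2\sum_j j^{-2b}2^{-j}\leqslant C\delta^2$, and $\|\Phi\|_{L^\infty}\leqslant\delta\sum_j j^{-b}2^{-j}\|\psi\|_{L^\infty}\leqslant C\delta$. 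Similarly, for $\sigma>0$ the Besov norm is
\begin{equation*}
\|\Phi\|_{\dot B^{3/2-\sigma}_{2,1}}\simeq\delta\sum_j j^{-b}2^{-\sigma j}\leqslant C_\sigma\delta,
\end{equation*}
with $C_\sigma$ independent of $\delta$ and $K$. This yields \eqref{Phi-small} and \eqref{Phi-Besov}. Finally we compute
\begin{equation*}
-\Phi'(0)=\delta\sum_{j=1}^K j^{-b}\geqslant c\,\delta K^{1-b},
\end{equation*}
since $b<1$. This is \eqref{Phi-large}, and it is exactly the logarithmic divergence that $\dot H^{3/2}\not\hookrightarrow\dot W^{1,\infty}$ permits.

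The only delicate point is choosing $\psi$ so that the dyadic pieces add coherently at $x=0$ while keeping Fourier support in an annulus. For this we take $\psi=-\theta'$, where $\theta$ is even with $\widehat\theta=\chi\geqslant0$. Then $\psi'(0)=-\theta''(0)=\frac1{2\pi}\int\xi^2\chi<0$ has the wrong sign, so we instead take $\psi=\theta'$, giving $\psi'(0)=\theta''(0)=-\frac1{2\pi}\int\xi^2\chi(\xi)\,d\xi<0$. This is a fixed negative constant, and we normalize it to $-1$. With this choice, every term of $\Phi'(0)=\delta\sum_j j^{-b}\psi'(0)$ has the same sign, so no cancellation occurs. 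The resulting $\Phi$ is real and Schwartz; it is not compactly supported, but the statement does not require that, and physical localization is deferred to the next lemma.
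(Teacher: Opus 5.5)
Your final construction $\Phi=\delta\sum_{j=1}^{K} j^{-b}2^{-j}\psi(2^j x)$, with $\psi=\theta'$ and $\widehat\theta=\chi\geqslant 0$, is correct and is essentially the paper's proof in dyadic form: the paper uses the continuous multiplier $\widehat\Phi_{\delta,K}=i\delta\,\mathrm{sign}(\xi)\chi_K(\xi)|\xi|^{-2}(\log_2|\xi|)^{-b}$, which is the same odd, purely imaginary profile with weight $j^{-b}$ at frequency $2^j$, and it gets the bounds from $\int \frac{dr}{r(\log_2 r)^{2b}}<\infty$ and $\int_2^{2^K}\frac{dr}{r(\log_2 r)^{b}}\gtrsim K^{1-b}$, exactly where you use $\sum_j j^{-2b}<\infty$ and $\sum_{j\leqslant K}j^{-b}\geqslant K^{1-b}$. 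In a write-up, delete the abandoned first formula, and correct the stray ``$<0$'' in the $\psi=-\theta'$ sentence to $>0$; the formula's weights $2^{-3j/2}j^{-b}K^{-1/2}$ do not match your real-space form and would make the $\dot H^{3/2}$ norm grow like $2^{K}/K$, and its factor $-i$ gives the wrong sign of $\Phi'(0)$.
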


\begin{proof}
Let $\chi_K\in C_c^\infty(\mathbb R)$ be a smooth even cut-off function satisfying
\begin{equation*}
	0 \leqslant \chi_K(\xi) \leqslant 1,  \qquad  \chi_K(\xi) = 1 \quad\text{for } 2 \leqslant |\xi| \leqslant 2^K,
\end{equation*}
and
\begin{equation*}
\mathrm{supp} \chi_K  \subset \left\{\xi\in \mathbb R : \tfrac32 \leqslant |\xi| \leqslant 2^{K+1} \right\}.
\end{equation*}
We then define
\begin{equation}\label{Phi-hat}
\widehat \Phi_{\delta,K}(\xi) \bydef i\delta \ \mathrm{sign}(\xi) \frac{\chi_K(\xi)}{|\xi|^2(\log_2|\xi|)^b},
\end{equation}
for all $\xi\in \mathbb R$. 
As the Fourier transform of  $\Phi_{\delta,K}(\xi)$ is imaginary and odd, by construction, it then follows that 
$\widehat\Phi_{\delta,K}(-\xi)=\overline{\widehat\Phi_{\delta,K}(\xi)}$ and, thus, $\Phi_{\delta,K}(\xi)$ is real.  Moreover, a direct
calculation gives
\begin{equation*}
	  \|\Phi_{\delta,K}\|_{\dot H^{3/2}}^2  \lesssim \delta^2 \int_{\tfrac32}^{2^{K+1}} \frac{\dd r}{r(\log_2r)^{2b}}   \lesssim \delta^2,
\end{equation*}
due to the assumption that $2b>1$.  Similarly, it is readily seen that 
\begin{equation*}
	  \|\Phi_{\delta,K}\|_{\dot H^1}^2  \lesssim \delta^2 \int_{\tfrac32}^{2^{K+1}} \frac{\dd r}{r^2}  \lesssim \delta^2,  \qquad		  \|\Phi_{\delta,K}\|_{L^\infty}  \lesssim \|\widehat\Phi_{\delta,K} \|_{L^1} \lesssim \delta.
\end{equation*}
On the other hand, with the sign in \eqref{Phi-hat}, we observe that one has at the origin  
\begin{equation*}
	  -\Phi_{\delta,K}'(0)  = c \delta \int_{\tfrac32}^{2^{K+1}} \frac{\chi_K(r) \dd r}{r(\log_2r)^b}  \geqslant c \delta K^{1-b}.
\end{equation*}
Finally, at the dyadic scale $2^j$, one computes that 
\begin{equation*}
	  2^{3j/2} \| \dot\Delta_j \Phi_{\delta,K} \|_{L^2} \lesssim  \delta j^{-b}.
\end{equation*}
Therefore, multiplying by $2^{-j\sigma}$ and summing up, we obtain  \eqref{Phi-Besov}. This completes the proof of the lemma.
\end{proof}

By building on the profiles constructed in the preceding lemma, we now finalize the construction of the building blocks for the initial data. The key additional property of the profiles constructed next, which we call bubbles, is their compact spatial support.

\begin{lem}[Compact critical bubbles]\label{lem:bubbles}
There exist two constants $c>0$ and $C>0$ and a family $(\varphi_\varepsilon)_{0<\varepsilon<1}$ of functions in $C_c^\infty(\R)$ with the properties
\begin{gather}\label{bubble-support}
  \supp\varphi_\varepsilon \subset[-\varepsilon,\varepsilon], \\ \label{bubble-small}
  \|\varphi_\varepsilon\|_{\dot H^{3/2}}  +\|\varphi_\varepsilon\|_{\dot H^1}  +\|\varphi_\varepsilon\|_{L^\infty}   \leqslant C \varepsilon^2, \\ \label{bubble-large}
  -\inf_{x\in\R}\varphi_\varepsilon'(x)   \geqslant -\varphi_\varepsilon'(0)\geqslant c \varepsilon^{-1},
\end{gather}
for all $\varepsilon\in (0,1)$.
\end{lem}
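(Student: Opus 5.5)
The plan is to localize the profile $\Phi_{\delta,K}$ of Lemma \ref{lem:frequency-profile} in physical space, rescale it to support size $\varepsilon$, and then choose $\delta$ and $K$ as functions of $\varepsilon$. The key point is that the $\dot H^{3/2}$ seminorm is invariant under the dilation $f\mapsto f(\cdot/\lambda)$. By contrast, the $L^\infty$ norm is invariant, the $\dot H^1$ norm scales like $\lambda^{-1/2}$, and the slope scales like $\lambda^{-1}$. Here $\lambda=\varepsilon$ shrinks the support.

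Fix an even cutoff $\theta\in C_c^\infty(-1,1)$ with $\theta=1$ near $0$, and set
\begin{equation*}
\psi \bydef \theta\,\bigl(\Phi_{\delta,K}-\Phi_{\delta,K}(0)\bigr),
\qquad
\varphi_\varepsilon(x)\bydef \varepsilon\,\psi(x/\varepsilon).
\end{equation*}
Then $\supp\varphi_\varepsilon\subset[-\varepsilon,\varepsilon]$. Since $\theta=1$ near $0$, we have $\psi'(0)=\Phi_{\delta,K}'(0)$, and hence $\varphi_\varepsilon'(0)=\Phi_{\delta,K}'(0)\le -c\delta K^{1-b}$ by \eqref{Phi-large}.

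Subtracting the constant $\Phi_{\delta,K}(0)$ is harmless: $|\Phi_{\delta,K}(0)|\le C\delta$ by \eqref{Phi-small}, and $\theta$ is a fixed Schwartz function. The norms of $\varphi_\varepsilon$ then satisfy
\begin{equation*}
\|\varphi_\varepsilon\|_{L^\infty}=\varepsilon\|\psi\|_{L^\infty},\quad
\|\varphi_\varepsilon\|_{\dot H^1}=\varepsilon^{1/2}\|\psi\|_{\dot H^1},\quad
\|\varphi_\varepsilon\|_{\dot H^{3/2}}=\|\psi\|_{\dot H^{3/2}}.
\end{equation*}

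The main obstacle is to show that multiplying by $\theta$ costs only a constant:
\begin{equation*}
\|\psi\|_{\dot H^{3/2}}+\|\psi\|_{\dot H^1}+\|\psi\|_{L^\infty}\le C\delta,
\end{equation*}
with $C$ independent of $K$. This uniformity is needed because $\Phi_{\delta,K}$ is not in $L^2$ uniformly; it is only a bounded function with derivative in $H^{1/2}$.

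Write $\psi'=\theta'(\Phi_{\delta,K}-\Phi_{\delta,K}(0))+\theta\,\Phi_{\delta,K}'$ and estimate the two terms separately.
\begin{itemize}
\item For the second term, use the product estimate $\|\theta g\|_{\dot H^{1/2}}\lesssim \|\theta\|_{W^{1,\infty}}\|g\|_{H^{1/2}}$, valid by interpolation between $L^2$ and $H^1$. With $g=\Phi_{\delta,K}'$, we have $\|g\|_{L^2}+\|g\|_{\dot H^{1/2}}\lesssim\delta$.
\item For the first term, $\theta'$ is compactly supported and $\Phi_{\delta,K}-\Phi_{\delta,K}(0)$ is bounded by $2C\delta$ in $L^\infty$. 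Its derivative is bounded in $L^2$ by $C\delta$, so on $\supp\theta'$ it lies in $H^1$ with norm $\lesssim\delta$. Hence this term is in $H^1\subset H^{1/2}$ with norm $\lesssim\delta$.
\end{itemize}
This uses only \eqref{Phi-small}; the Besov bound \eqref{Phi-Besov} could serve as an alternative route if needed. The $\dot H^1$ and $L^\infty$ bounds for $\psi$ are immediate.

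It remains to choose parameters. Take $\delta=\varepsilon^2$, which gives
\begin{equation*}
\|\varphi_\varepsilon\|_{\dot H^{3/2}}+\|\varphi_\varepsilon\|_{\dot H^1}+\|\varphi_\varepsilon\|_{L^\infty}\le C\varepsilon^2,
\end{equation*}
which is \eqref{bubble-small}. Next choose $K=K(\varepsilon)$ so large that $K^{1-b}\ge \varepsilon^{-3}$, which is possible since $b<1$. Then
\begin{equation*}
-\varphi_\varepsilon'(0)\ge c\,\varepsilon^2K^{1-b}\ge c\,\varepsilon^{-1},
\end{equation*}
which is \eqref{bubble-large}. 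The function $\varphi_\varepsilon$ is smooth because $\widehat\Phi_{\delta,K}$ is compactly supported, and it has compact support by construction, so $\varphi_\varepsilon\in C_c^\infty(\R)$. The first inequality in \eqref{bubble-large} is trivial.
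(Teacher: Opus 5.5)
Your argument is correct, but it follows a different route from the paper's.

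\textbf{The paper's route.} The paper does not rescale. It sets $\varphi_\varepsilon=\psi(\cdot/\varepsilon)\Phi_{\delta,K}$, cutting off directly at scale $\varepsilon$. The $\dot H^{3/2}$ bound then needs a paraproduct decomposition, whose high--low term costs $\|\psi(\cdot/\varepsilon)\|_{\dot H^{3/2+\gamma}}\sim\varepsilon^{-(1+\gamma)}$. This cost is paid by the absence of low frequencies in $\Phi_{\delta,K}$, which gives $\|\Phi_{\delta,K}\|_{\dot H^{1/2-\gamma}}\lesssim\delta$. It is also paid by the tuned choice $N=K^{1-b}$, $\delta=N^{-7/8}$, $\varepsilon=N^{-1/8}$, $\gamma=4$.

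\textbf{Your route.} You cut off at unit scale, where a fixed $\theta$ costs only an $O(1)$ factor through Leibniz and interpolation bounds. You then use the exact critical scaling $f\mapsto\varepsilon f(\cdot/\varepsilon)$. This scaling preserves both the $\dot H^{3/2}$ seminorm and the slope at $0$, while shrinking $L^\infty$ by $\varepsilon$ and $\dot H^1$ by $\varepsilon^{1/2}$. Smallness then comes simply from $\delta=\varepsilon^2$, and the large slope from $K^{1-b}\ge\varepsilon^{-3}$.

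\textbf{Comparison.} Your approach is more elementary: no paraproducts and no balancing of exponents. It also separates the roles of $\delta$ and $K$ cleanly. The paper's construction keeps $\varphi_\varepsilon$ literally equal to $\Phi_{\delta,K}$ near the origin, but the lemma does not need that.

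\textbf{Cosmetic slips.}
\begin{itemize}
\item Your opening sentence attributes $\dot H^{3/2}$-invariance to $f\mapsto f(\cdot/\lambda)$. The invariant dilation is $f\mapsto\lambda f(\cdot/\lambda)$; your displayed scalings are nevertheless correct.
\item $\Phi_{\delta,K}$ is in fact uniformly bounded in $L^2$, because $\widehat\Phi_{\delta,K}$ vanishes for $|\xi|<3/2$ and is $O(\delta|\xi|^{-2})$. So subtracting $\Phi_{\delta,K}(0)$ is unnecessary, and you could simply write $\|\theta\Phi_{\delta,K}\|_{H^{3/2}}\lesssim\|\Phi_{\delta,K}\|_{H^{3/2}}\lesssim\delta$.
\end{itemize}
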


\begin{proof}
Let $\psi\in C_c^\infty(\R)$ be such that  $0\leqslant \psi \leqslant 1$, $\psi=1$ on $[-1/2,1/2]$, and $\supp\psi\subset[-1,1]$.  
Given the notations from the proof of Lemma \ref{lem:frequency-profile}, we further introduce a large parameter $N$  by setting $N=K^{1-b}$, and  
\begin{equation*}
	 \delta=N^{-7/8}, \qquad \varepsilon = N^{-1/8}, \qquad   \varphi_\varepsilon(x) \bydef \psi(x/\varepsilon) \Phi_{\delta,K}(x).
\end{equation*}
Accordingly, we have that  
\begin{equation*}
	\supp \varphi_\varepsilon  \subset[-\varepsilon ,\varepsilon] \qquad  \text{and} \qquad 
-\varphi_\varepsilon'(0)  =  -\Phi_{\delta,K}'(0)  \geqslant  c N^{1/8} = c\varepsilon^{-1},
\end{equation*}
which establishes assertions \eqref{bubble-support} and \eqref{bubble-large} altogether.

It now remains to estimate the norms after the physical cut-off. 
To that end, we employ standard paraproduct estimates (see \cite[Theorems 2.47 and 2.52]{BahouriCheminDanchin2011}, for instance) to obtain, for
any fixed $\gamma>0$, that
\begin{align*}
 \|\psi(\cdot/\varepsilon) \Phi_{\delta,K}\|_{\dot H^{3/2}}  & \lesssim \|\psi(\cdot/\varepsilon)\|_{L^\infty}   \|\Phi_{\delta,K}\|_{\dot H^{3/2}} + \|\psi(\cdot/\varepsilon)\|_{\dot H^{3/2+\gamma}}  \|\Phi_{\delta,K}\|_{\dot B^{-\gamma}_{\infty,\infty}}  \\
 &\lesssim \delta + \varepsilon^{-(1+\gamma)}   \|\Phi_{\delta,K}\|_{\dot H^{1/2-\gamma}},
\end{align*}
where the first term on the right hand-side comes from the low-high frequencies  and the reminder, while the second one is the contribution from the high-low frequencies of the estimated product.
The last norm is bounded by $C_\gamma \delta$ by the same dyadic computation
as in Lemma \ref{lem:frequency-profile}. 
Taking $\gamma=4$ gives
\begin{equation*}
	  \|\varphi_\varepsilon\|_{\dot H^{3/2}}  \lesssim N^{-7/8} + N^{5/8} N^{-7/8}  \lesssim N^{-1/4} = \varepsilon^2.
\end{equation*}
The same argument at order one, together with the $L^\infty$ estimate in
\eqref{Phi-small}, yields
\begin{equation*}
	  \|\varphi_\varepsilon\|_{\dot H^1}  + \|\varphi_\varepsilon\|_{L^\infty} \lesssim \varepsilon ^2,
\end{equation*}
thereby concluding the proof of the lemma.
\end{proof}

We next record the almost-orthogonality statement used for the differentiated increments.

\begin{lem}[Separated $\dot H^{1/2}$ seminorms]\label{lem:separated}
Let $(f_k)_{k\in \mathbb N}$ be a sequence of measurable functions in $L^1(\R)$ such that
\begin{equation*}
	  \sup_{k\in \mathbb N} \|f_k\|_{L^1 (\mathbb R)} \leqslant A,  \qquad  \supp f_k \subset[X_k-R,X_k+R]
\end{equation*}
for a fixed $A,R>0$, and some sequence of positions $(X_k)_{k \in \mathbb N}$.  
Assume further that 
\begin{equation}\label{exponential-separation}
  X_{k+1} - X_k \geqslant 4^k + 2 R, 
\end{equation}
for all $k\in \mathbb N$.
Then, there is a constant $C=C(A,R)$ such that, for every $N$,
\begin{equation}\label{almost-orthogonality}
  \left|  \left\| \sum_{k=1}^N f_k\right\|_{\dot H^{1/2}}^2 -\sum_{k=1}^N \|f_k\|_{\dot H^{1/2}}^2  \right| \leqslant C.
\end{equation}
\end{lem}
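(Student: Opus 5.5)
The plan is to expand the square and use the Gagliardo representation of the $\dot H^{1/2}$ seminorm. Writing $F_N=\sum_{k=1}^N f_k$, we have
\begin{equation*}
  \|F_N\|_{\dot H^{1/2}}^2 = \sum_{k=1}^N \|f_k\|_{\dot H^{1/2}}^2 + 2\sum_{1\leqslant j<k\leqslant N} \langle f_j,f_k\rangle_{\dot H^{1/2}},
\end{equation*}
so it suffices to bound the cross terms uniformly in $N$. The implicit finiteness of each $\|f_k\|_{\dot H^{1/2}}$ is not needed for the cross terms, which we compute directly. The Fourier representation is $\langle f,g\rangle_{\dot H^{1/2}}=\int|\xi|\widehat f\,\overline{\widehat g}\dd\xi$. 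For $f,g$ with disjoint supports, the diagonal part of the Gagliardo double integral vanishes, and the bilinear form reduces to the kernel expression
\begin{equation*}
  \langle f,g\rangle_{\dot H^{1/2}} = -c_0\iint_{\R^2}\frac{f(x)g(y)}{|x-y|^{2}}\dd x\dd y ,
\end{equation*}
for some absolute constant $c_0>0$. This follows from polarizing $\iint |F(x)-F(y)|^2|x-y|^{-2}$: the terms $f(x)^2$, $g(y)^2$ are absorbed into the individual norms, and the genuinely mixed term is $-2\iint f(x)g(y)|x-y|^{-2}$. I will state this identity for $L^1$ functions with supports at positive distance, where the integral converges absolutely.

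Next I bound each cross term. For $j<k$, the supports lie in $[X_j-R,X_j+R]$ and $[X_k-R,X_k+R]$. Hence, for $x\in\supp f_j$ and $y\in\supp f_k$,
\begin{equation*}
  |x-y|\geqslant X_k-X_j-2R\geqslant X_{j+1}-X_j-2R\geqslant 4^j .
\end{equation*}
More precisely, by telescoping \eqref{exponential-separation} one gets $X_k-X_j-2R\geqslant \sum_{i=j}^{k-1}4^i\geqslant 4^{k-1}$. It follows that
\begin{equation*}
  |\langle f_j,f_k\rangle_{\dot H^{1/2}}|\leqslant c_0\|f_j\|_{L^1}\|f_k\|_{L^1}\,4^{-2(k-1)}\leqslant c_0A^2\,16^{-(k-1)} .
\end{equation*}
Summing over $j<k$ gives at most $c_0A^2\sum_k (k-1)16^{-(k-1)}<\infty$, a constant depending only on $A$; in this form it does not even depend on $R$. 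This yields \eqref{almost-orthogonality}.

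The main technical point is justifying the off-diagonal kernel formula for merely $L^1$ functions whose individual $\dot H^{1/2}$ norms may be infinite. I would handle it by noting that \eqref{almost-orthogonality} is only meaningful when the right-hand quantities are finite. One then applies the Gagliardo identity to $F_N$ and splits $\R^2$ into the blocks $\supp f_j\times\supp f_k$. The diagonal blocks, together with the cross terms $f_k(x)^2$ integrated over $y$ in other supports, recombine into $\sum_k\|f_k\|^2$; the mixed products form absolutely convergent integrals by the distance bound above. If either side is infinite, the identity still holds, with both sides infinite.
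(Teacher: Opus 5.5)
Your proposal is correct and follows essentially the paper's route. You expand $|F_N(x)-F_N(y)|^2$ in the Gagliardo form, and disjointness kills the products $f_i(x)f_k(x)$. You then bound the mixed terms $\iint f_i(x)f_k(y)|x-y|^{-2}\dd x\dd y$ by $A^2\,\dist(\supp f_i,\supp f_k)^{-2}$ and sum using the exponential separation; your distance bound $4^{k-1}$ replaces the paper's $c\,2^{i+k}$, which is an inessential variation.
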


\begin{proof}
For the finite sum
\begin{equation*}
	f^N = \sum_{k=1}^N f_k,
\end{equation*}
 writing   
\begin{equation*}
	\left |\sum_{k=1}^N \left(  f_k(x) -  f_k(y)\right)\right |^2  = \sum_{k=1}^N |f_k (x)- f_k(y)|^2 + 2 \sum_{1\leqslant i < k \leqslant N} ( f_i(x)- f_i (y))( f_k(x)- f_k (y)),
\end{equation*}
 for all $x,y \in \mathbb R$, it then follows by the disjointness of the supports that 
\begin{align}\label{increments0}
 |f^N(x) - f^N(y)|^2  & = \sum_{k=1}^N|f_k(x)-f_k(y)|^2  -2 \sum_{\substack{i,k=1\\i\ne k}}^N f_i(x)f_k(y).
\end{align}
For $i\neq k$, \eqref{exponential-separation} implies 
\begin{equation*}
	  \dist(\supp f_i,\supp f_k) \geqslant c 2^{i+k}
\end{equation*}
for some universal constant $c>0$. Therefore, we deduce that 
\begin{equation*}
	  \sum_{i\ne k}\iint_{\R^2}  \frac{|f_i(x)f_k(y)|}{|x-y|^2}\dd x\dd y  \leqslant C A^2 \sum_{i\ne k}2^{-i-k}<\infty,
\end{equation*}
uniformly in $N$.  
Thus, integrating the  identity \eqref{increments0} against $|x-y|^{-2}\dd x\dd y$ proves \eqref{almost-orthogonality}.  
The assertion for infinite sums follow by applying the estimate to partial sums and using Fatou's lemma.
\end{proof}

\subsection{One-sided localization}\label{sec:localization}

The nonlocal forcing in \eqref{HS} prevents a usual finite-speed statement.
What replaces it is a one-sided domain of dependence encoded by \eqref{charX}--\eqref{charU}.

Once again, we will use the  explicit representation of the unique global dissipative solution given by \eqref{charU}.

\begin{pro}[Evolution of one-sided supports]\label{prop:support} 
Let $u_0 \in C^\infty(\R)\cap L^\infty(\mathbb R)\cap\dot H^1(\mathbb R)$, and let $u$ be the associated global
dissipative solution.
\begin{enumerate}[label=\textup{(\roman*)}]
\item If $\supp u_0\subset[a,\infty)$, then
\begin{equation*}
	 \supp u(t,\cdot)\subset[a,\infty),  \qquad \text{for every } t \geqslant 0.
\end{equation*}
\item If $\supp u_0\subset(-\infty,b]$, then
\begin{equation}\label{right-q-support}
  \supp \partial_x u(t,\cdot)  \subset \left(-\infty,b+\frac{t^2}{4}\|u_0'\|_{L^2}^2\right],  \qquad\text{for every } t \geqslant 0.
\end{equation}
Moreover, $u(t,\cdot)$ is constant on the right of the endpoint in
\eqref{right-q-support}.
\end{enumerate}
\end{pro}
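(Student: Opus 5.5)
The plan is to read everything off the explicit representation \eqref{charX}--\eqref{charU}, using that the dissipative characteristic map $\xi\mapsto X(t,\xi)$ is nondecreasing, continuous and onto $\R$ for each fixed $t$. This is part of the Bressan--Constantin/Dafermos theory: $X(t,\cdot)$ is a monotone Lipschitz map that goes to $\pm\infty$ as $\xi\to\pm\infty$, since $u_0$ is bounded and the energy term is bounded by $\frac{t^2}{4}\|u_0'\|_{L^2}^2$. Consequently every point $x$ is of the form $x=X(t,\xi)$, and statements about $u(t,x)$ reduce to statements about the right-hand side of \eqref{charU} evaluated at some $\xi$ with $X(t,\xi)=x$.

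\textbf{Part (i).} Suppose $\supp u_0\subset[a,\infty)$. For $\xi<a$ we have $u_0(\xi)=0$ and $u_0'=0$ on $(-\infty,\xi)$, so both integrals in \eqref{charX} and \eqref{charU} vanish. Hence
\[
X(t,\xi)=\xi \quad\text{and}\quad u(t,X(t,\xi))=0 \qquad \text{for all } \xi<a .
\]
So $X(t,\cdot)$ is the identity on $(-\infty,a)$, and $u(t,x)=0$ for every $x<a$. This gives $\supp u(t,\cdot)\subset[a,\infty)$.

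\textbf{Part (ii).} Suppose $\supp u_0\subset(-\infty,b]$ and fix $\xi\ge b$. Then $u_0(\xi)=0$, and $u_0'$ vanishes on $(b,\xi)$, so $\int_{I_\tau\cap(-\infty,\xi)}u_0'^2$ equals $\int_{I_\tau\cap(-\infty,b)}u_0'^2=:m(\tau)$. This quantity is independent of $\xi$ and bounded by $\|u_0'\|_{L^2}^2$. Therefore, for all $\xi\ge b$,
\[
X(t,\xi)=\xi+\tfrac12\int_0^t(t-\tau)m(\tau)\dd\tau ,\qquad
u(t,X(t,\xi))=\tfrac12\int_0^t m(\tau)\dd\tau .
\]
Thus $X(t,\cdot)$ is a pure translation on $[b,\infty)$, with shift at most $\tfrac12\|u_0'\|_{L^2}^2\int_0^t(t-\tau)\dd\tau=\tfrac{t^2}{4}\|u_0'\|_{L^2}^2$. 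Moreover $u(t,\cdot)$ equals the constant $\tfrac12\int_0^t m$ on $X(t,[b,\infty))=[X(t,b),\infty)$. Since $X(t,b)\le b+\tfrac{t^2}{4}\|u_0'\|_{L^2}^2$, the function $u(t,\cdot)$ is constant to the right of this endpoint, and $\partial_x u(t,\cdot)$ vanishes there. This gives \eqref{right-q-support} and the final claim.

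\textbf{Main obstacle.} The only delicate point is justifying that $u(t,x)$ is determined by \eqref{charU} at every $x$, i.e.\ surjectivity and monotonicity of $X(t,\cdot)$ after breaking. After breaking, $X(t,\cdot)$ may collapse intervals but never reverses order. It is also single-valued in the sense that $u$ is well defined on collapsed intervals, because there \eqref{charU} gives the same value: the removed energy is exactly what is excluded from $I_\tau$. I would quote this from \cite{BressanConstantin2005,Dafermos2011} rather than reprove it. Continuity of $X(t,\cdot)$ together with the limits at $\pm\infty$ then gives surjectivity.
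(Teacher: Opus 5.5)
Your proof is correct and follows the paper's argument essentially verbatim. In (i) the energy integrals vanish for $\xi<a$, so $X(t,\xi)=\xi$ and $u(t,\xi)=0$. In (ii) the inner integral in the characteristic formulas is independent of $\xi$ beyond $b$, so $X(t,\cdot)$ translates $[b,\infty)$ by at most $\frac{t^2}{4}\|u_0'\|_{L^2}^2$ and $u(t,\cdot)$ is constant on the image.

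The ``main obstacle'' you raise is not actually needed. In both parts you exhibit explicitly the labels whose images cover the relevant half-lines: the identity on $(-\infty,a)$, and the translate $[X(t,b),\infty)$ of $[b,\infty)$. Separately, your side claim that $X(t,\cdot)$ is Lipschitz would require $u_0'$ to be bounded, which is not assumed, though your argument never uses it.
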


\begin{proof}
If $\xi < a$, then $u_0(\xi)=0$ and $u_0'=0$ on $(-\infty,\xi)$.  
In this case, we notice that 
\begin{equation*}
		0 \leqslant \int_0^t \int_{I_\tau \cap (-\infty,\xi )} ( u_0'(\zeta ))^2  \dd \zeta \dd \tau
		 \leqslant \int_0^t \int_{ -\infty}^{\xi }  ( u_0'(\zeta   ))^2   \dd \zeta \dd \tau
		 =0,
	\end{equation*}
whence, formulas \eqref{charX} and \eqref{charU} give that $X(t,\xi)=\xi$ and $u(t,\xi)=0$, thereby proving (i).

We turn now to the proof of (ii). If $\xi>b$, then $u_0(\xi)=0$ and the inner integrals in
\eqref{charX}--\eqref{charU} no longer depend on $\xi$.  Thus, with
\begin{equation*}
	  \kappa(\tau)\bydef \frac12 \int_{I_\tau}( u_0'(\zeta   ))^2\dd\zeta = \frac12 \int_{I_\tau \cap (-\infty,\xi )}   ( u_0'(\zeta   ))^2  \dd \zeta \dd \tau  \leqslant \frac12 \|u_0'\|_{L^2(\mathbb R)}^2,
\end{equation*}
it follows that 
\begin{equation*}
	  X(t,\xi) = \xi+\int_0^t(t-\tau) \kappa(\tau) \dd \tau, \qquad  u(t,X(t,\xi)) = \int_0^t \kappa(\tau) \dd \tau.
\end{equation*}
Therefore, it is readily seen that 
\begin{equation*}
		u\left  (t,x \right)=     \int_0^t  \kappa(\tau ) \dd \tau,
\end{equation*}
for any $x>b+\int_0^t (t-\tau) \kappa(\tau) \dd \tau $ and $t\geqslant 0$.
The displacement is at most $t^2\|u_0'\|_{L^2(\mathbb R)}^2/4$, whence, beyond the endpoint in \eqref{right-q-support}, $u$ is spatially constant, and $\partial_x u=0$. This completes the proof of the proposition.
\end{proof}

The next proposition establishes several essential properties of two solutions arising from initial data with a common one-sided domain of dependence. 
These properties will play a central role in the proof of the nonexistence of solutions to \eqref{HS} in critical Sobolev spaces where the essence of our argument relies on estimating a superposition of dissipative solutions generated by carefully chosen bubbles of initial data. 
In particular, the support properties established below will constitute a cornerstone of the nonexistence proof.

\begin{pro}[One-sided domain of dependence]\label{prop:domain}
Let $u_{0,1},u_{0,2} \in L^\infty(\mathbb R)\cap\dot H^1(\mathbb R)$, and denote by $u_1,u_2$ be their dissipative
solutions, respectively.  
Assume further that
\begin{equation*}
	  u_{0,1} = u_{0,2} \quad \text{on }(-\infty,a],
\end{equation*}
for some $a\in \mathbb R$. Then, setting $M=\max_i\|u_{0,i}\|_{L^\infty}$, it holds that
\begin{equation}\label{domain}
  u_1(t,x) = u_2(t,x)  \qquad \text{whenever } x \leqslant a-tM.
\end{equation}
If, in addition, $\supp u_{0,i}\subset(-\infty,b_i]$, for some $b_1,b_2\in \mathbb R$, then one has that
\begin{equation}\label{difference-right-support}
  \supp \bigl(\partial_xu_2(t,\cdot)-\partial_xu_1(t,\cdot) \bigr)  \subset \left(-\infty,   \max_i\left(b_i + \frac{t^2}{4}\|u_{0,i}'\|_2^2\right) \right],
\end{equation}
for all $t\geqslant 0$.
\end{pro}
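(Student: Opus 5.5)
We work directly with the characteristic representation \eqref{charX}--\eqref{charU} for both solutions, writing $X_i$ and $I^i_\tau$ for the dissipative characteristic and active set built from $u_{0,i}$. The key observation is that for $\xi\leqslant a$ every quantity in these formulas depends only on $u_{0,i}$ restricted to $(-\infty,\xi]$. Indeed, the integrals run over $I^i_\tau\cap(-\infty,\xi)$, and membership of $\zeta<\xi$ in $I^i_\tau$ is decided by $u_{0,i}'(\zeta)$ alone. Hence
\begin{equation*}
X_1(t,\xi)=X_2(t,\xi),\qquad u_1(t,X_1(t,\xi))=u_2(t,X_2(t,\xi)),\qquad \text{for all }\xi\leqslant a,\ t\geqslant0.
\end{equation*}
(At $\xi=a$ we use continuity of $u_{0,i}$, which holds since $u_{0,i}\in\dot H^1$.)

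Next we locate the image region. Each $X_i(t,\cdot)$ is nondecreasing and continuous in $\xi$, with $X_i(t,\xi)\to\pm\infty$ as $\xi\to\pm\infty$, so it is onto $\mathbb R$. Also, \eqref{charX} gives $X_i(t,a)\geqslant a+tu_{0,i}(a)\geqslant a-tM$, because the double integral is nonnegative. Now take $x\leqslant a-tM$. Surjectivity and monotonicity provide some $\xi\leqslant a$ with $X_i(t,\xi)=x$. If $x<X_i(t,a)$ this is immediate. In the borderline case $x=X_i(t,a)$ we simply take $\xi=a$.

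Since the characteristic maps agree on $(-\infty,a]$, the same $\xi$ serves both solutions. Therefore $u_1(t,x)=u_2(t,x)$, which proves \eqref{domain}. One subtlety needs attention: if $X_i(t,\cdot)$ is constant on an interval (a collapsed region after breaking), the value $u_i(t,x)$ must be well defined. It is, because $u_i$ is constant along a collapsed interval by \eqref{charU}: the $\xi$-derivative of the right-hand side vanishes wherever $\partial_\xi X=0$, by the usual dissipative Lagrangian identities. Alternatively, we can just use that $u_i(t,\cdot)$ is the well-defined continuous function given by the representation.

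For \eqref{difference-right-support}, we apply Proposition \ref{prop:support}(ii) to each $u_i$ separately. This gives $\partial_x u_i(t,\cdot)=0$ on $\bigl(b_i+\tfrac{t^2}{4}\|u_{0,i}'\|_2^2,\infty\bigr)$. Hence the difference $\partial_xu_2-\partial_xu_1$ vanishes to the right of the maximum of the two endpoints, which is exactly the claimed inclusion. The only real obstacle is the bookkeeping for the first part, namely making sure that ``depends only on data left of $\xi$'' is literally true for the active sets and that the preimage point can be chosen in $(-\infty,a]$. Both follow from the explicit formulas and the lower bound $X_i(t,a)\geqslant a-tM$.
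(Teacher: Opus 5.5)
Your proposal is correct and follows the same route as the paper: characteristics and Lagrangian values agree for labels $\xi\leqslant a$; monotonicity together with $X_i(t,a)\geqslant a-tM$ covers $(-\infty,a-tM]$; and Proposition \ref{prop:support}(ii) is applied to each solution separately for the support inclusion. Your extra details make explicit what the paper leaves implicit, namely continuity and coercivity of $X_i(t,\cdot)$ to produce a preimage in $(-\infty,a]$, and the constancy of $u_i$ on collapsed label intervals.
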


\begin{proof}
For every $\xi \leqslant a$, the initial values, their almost-everywhere derivatives,
and the truncated energy integrals over $(-\infty,\xi)$ agree.  Therefore
\eqref{charX}--\eqref{charU} yields that
\begin{equation*}
	  X_1(t,\xi) = X_2(t,\xi),  \qquad  u_1(t,X_1(t,\xi)) = u_2(t,X_2(t,\xi)).
\end{equation*}
Moreover, the characteristic maps are nondecreasing (see the proof of Theorem 1 in \cite{BressanConstantin2005}) and satisfy
\begin{equation*}
	  X_i(t,a)\geqslant a + t u_{0,i}(a) \geqslant a-tM,
\end{equation*}
because the double integral in \eqref{charX} is nonnegative.  
Their images of $(-\infty,a]$ therefore contain $(-\infty,a-tM]$, which proves
\eqref{domain}.  
The additional support statement follows by applying Proposition \ref{prop:support} to each solution separately. 
This completes the proof of the proposition.
\end{proof}

\subsection{Strong ill-posedness in the critical case}\label{sec:proof-main}

We are now in a position to prove the ill-posedness of \eqref{HS} in critical Sobolev space $\dot H^{\nicefrac{3}{2}}(\mathbb R)$.  

\begin{proof}[Proof of Theorem \ref{thm:main} for $s=\nicefrac{3}{2}$]
Let $(\varphi_\varepsilon)_{0<\varepsilon<1}$ be the family from Lemma \ref{lem:bubbles}, and choose a sequence $\varepsilon_n \downarrow 0$ such that
\begin{equation}\label{epsilon-choice}
  \varepsilon_n^2 \leqslant 2^{-n}, \qquad \text{with }  a_n \bydef -\inf_{x\in\R}\varphi_{\varepsilon_n}'(x)
\end{equation}
is strictly increasing, and $a_n \geqslant n$.  
This is possible by virtue of Lemma \ref{lem:bubbles}.  
To lighten the notations, we further set $\varphi_n=\varphi_{\varepsilon_n}$.

Next, we choose the centers $(\alpha_n)_{n\in \mathbb N}$ as follows.  Let
\begin{equation}\label{kappa}
  E_* \bydef \sum_{n=1}^\infty\|\varphi_n'\|_{L^2}^2 < \infty,  \qquad
  M_* \bydef \sum_{n=1}^\infty\|\varphi_n\|_{L^\infty} < \infty, \qquad
  R \bydef 2 + M_* + \frac14 E_*,
\end{equation}
and we define
\begin{equation}\label{centers}
  \alpha_1 = 1,  \qquad   \alpha_{n+1} = \alpha_n + 4^n + 2R.
\end{equation}
Accordingly, we now introduce the initial bubbles by setting
\begin{equation}\label{initial-bubbles}
  u_{0,n}(x) \bydef \varphi_n(x-\alpha_n), \qquad
  v_{0,n} \bydef \sum_{k=1}^n u_{0,k},   \qquad
  v_0 \bydef \sum_{k=1}^\infty u_{0,k}.
\end{equation}
By \eqref{bubble-small} and \eqref{epsilon-choice}, the last series converges in $L^\infty(\R)\cap\dot H^1(\R)\cap\dot H^{3/2}(\R)$.  
Thus $v_0$ is an admissible datum for the global dissipative flow.

Let $v_n$ and $v$ be the dissipative solutions generated by $v_{0,n}$ and $v_0$, respectively, and we further define the increments
\begin{equation}\label{increments}
  z_1 \bydef v_1,  \qquad   z_n \bydef v_n-v_{n-1}  \quad  (n \geqslant 2).
\end{equation}
We emphasize that the $z_n$ are increments of solutions, not solutions with
initial data $u_{0,n}$.

Now, we claim that, for $0\leqslant t \leqslant 1$,
\begin{equation}\label{increment-support}
  \supp \partial_x z_n(t,\cdot)   \subset [\alpha_n-R, \alpha_n+R].
\end{equation}
Indeed, as $v_{0,n}$ and $v_{0,n-1}$ agree on $(-\infty,\alpha_n-1]$, and their $L^\infty$ norms are bounded by $M_*$, then Proposition \ref{prop:domain} shows that $z_n(t,x)=0$ for all $x \in  (-\infty, \alpha_n-1-tM_*] \supset (-\infty,  \alpha_n-R]$.   
On the other hand, as $v_{0,n}$ and $v_{0,n-1}$ are supported in left half-lines with right endpoints at most $\alpha_n+1$, and their energies are at most $E_*$, then the second part of Proposition \ref{prop:domain} gives that  $\partial_x z_n(t,x) = 0$ for all
$x \geqslant \alpha_n+1+t^2E_*/4 \leqslant \alpha_n+R$.  
This proves \eqref{increment-support}.  

Notice that the same one-sided comparison, now without a right-support assumption, gives
the pointwise representation
\begin{equation}\label{pointwise-series}
  v(t,x) = \sum_{n=1}^\infty z_n(t,x), \qquad (t,x) \in [0,1] \times \R.
\end{equation}
Indeed, to see this, let  $(t,x)$ be fixed, and choose $n_0$ so large that $x \leqslant \alpha_{n_0+1}-1-tM_*$.  
As the data $v_0$ and $v_{0,n}$ agree on $(-\infty,\alpha_{n_0+1}-1]$ for every $n \geqslant n_0$, then
Proposition \ref{prop:domain} yields $v(t,x)=v_n(t,x)$.  
Therefore, telescoping \eqref{increments} proves \eqref{pointwise-series}.

Now, observe that the supports in \eqref{increment-support} are exponentially separated by \eqref{centers}, and that we also have a uniform $L^1$ bound for the differentiated increments.  
Indeed, by H\"older's inequality, the dissipative energy inequality, and \eqref{increment-support}, yield that
\begin{align}\nonumber
 \|\partial_xz_n(t,\cdot)\|_{L^1}  & \leqslant (2R)^{1/2}\|\partial_xz_n(t,\cdot)\|_{L^2} \\ \label{L1-increments}
 & \leqslant (2R)^{1/2}  \bigl(\|\partial_xv_n(t,\cdot)\|_{L^2} + \|\partial_xv_{n-1}(t,\cdot)\|_{L^2}\bigr)
 \leqslant 2(2RE_*)^{1/2},
\end{align}
uniformly in $n$ and $t \in [0,1]$.

Because the supports of the initial slopes are disjoint and the numbers $a_n$ are strictly increasing, it happens that 
\begin{equation*}
	  \inf_x v_{0,n}'(x) = \inf_x u_{0,n}'(x) = -a_n.
\end{equation*}
Hence, by Proposition \ref{prop:blowup time}, the strong solution $v_n$ blows-up at time
\begin{equation}\label{Tn}
  T_n \bydef \frac{2}{a_n}.
\end{equation}
Notice that the sequence $(T_n)_{n \in \mathbb N}$ is strictly decreasing and tends to zero.  
Moreover, $v_{n-1}$ remains smooth up to and beyond $T_n$.  
Therefore, by Lemma \ref{lem:critical-breaking}, we have
\begin{equation*}
  \limsup_{t \uparrow T_n}  \|\partial_x v_n(t,\cdot)\|_{\dot H^{1/2}} = \infty,	
\end{equation*}
and hence, by \eqref{increments}, one obtains
\begin{equation}\label{increment-inflation}
  \limsup_{t \uparrow T_n}  \|\partial_x z_n(t,\cdot)\|_{\dot H^{1/2}} = \infty.
\end{equation}

To conclude the proof, we take now any $T>0$, and choose $n$ so large that $T_n < \min\{T,1\}$.
Additionally, we select times $t_j \uparrow T_n$ for which the norm in \eqref{increment-inflation} tends to infinity.  
At each $t_j$, we now apply Lemma \ref{lem:separated} to $f_k = \partial_x z_k(t_j)$, using \eqref{increment-support}, \eqref{centers}, and \eqref{L1-increments}.  
Together with \eqref{pointwise-series}, this yields that 
\begin{equation*}
  \|\partial_x v(t_j,\cdot)\|_{\dot H^{1/2}}^2  \geqslant \|\partial_xz_n(t_j,\cdot)\|_{\dot H^{1/2}}^2-C,	
\end{equation*}
where $C$ is independent of $j$.  
Therefore
\begin{equation*}
\sup_{0<t<T} \|v(t,\cdot)\|_{\dot H^{3/2}}  = \sup_{0<t<T} \|\partial_xv(t,\cdot)\|_{\dot H^{1/2}} = \infty,
\end{equation*}
which establishes \eqref{main} and completes the proof of the theorem.
\end{proof}

\begin{rem}[On the stronger all-time statement]
The argument proves arbitrarily large critical norm at arbitrarily small positive times and therefore failure of persistence on every interval.  
It does not, by itself, imply $v(t) \notin \dot H^{3/2}$ for every fixed $t>0$.
Such a statement would require an additional analysis of the Eulerian singularity created at the boundary of the collapsed-label set $\{ u_0' \leqslant -2/t \}$ in the dissipative formula.  
We do not make that stronger claim here.  
\end{rem}

\bibliographystyle{plain} 
\bibliography{bib_file}

\end{document}